\definecolor{darkgreen}{rgb}{0,0.5,0}
\definecolor{darkred}{rgb}{0.7,0,0}
\theoremstyle{plain}
\numberwithin{equation}{section}
\newcommand{\pl}[2]{{\frac{\partial #1}{\partial #2}}}
\newcommand{\ti}{\tilde}
\newcommand{\al}{\alpha}
\newcommand{\be}{\beta}
\newcommand{\de}{\delta}
\newcommand{\la}{\lambda}
\newcommand{\si}{\sigma}
\renewcommand{\th}{\theta}
\newcommand{\vph}{\varphi}
\newcommand{\ep}{\varepsilon}
\newcommand{\R}{\ensuremath{{\mathbb R}}}
\newcommand{\Z}{\ensuremath{{\mathbb Z}}}
\newcommand{\C}{\ensuremath{{\mathbb C}}}
\newcommand{\weakto}{\rightharpoonup}
\newcommand{\downto}{\downarrow}
\newcommand{\tensor}{\otimes}
\newcommand{\lap}{\Delta}
\newcommand{\grad}{\nabla}
\newcommand{\union}{\cup}
\DeclareMathOperator{\id}{id}
\newcommand{\beq}{\begin{equation}}
\newcommand{\eeq}{\end{equation}}
\newcommand{\beqa}{\begin{equation}\begin{aligned}}
\newcommand{\eeqa}{\end{aligned}\end{equation}}
\newcommand{\brmk}{\begin{rmk}}
\newcommand{\ermk}{\end{rmk}}
\newcommand{\partref}[1]{\hbox{(\csname @roman\endcsname{\ref{#1}})}}
\newcommand{\half}{\frac{1}{2}}
\newcommand{\lie}{{\mathcal L}}
\newcommand{\Rm}{{\mathrm{Rm}}}
\newcommand{\Ric}{{\mathrm{Ric}}}
 \newtheorem{thm}{Theorem}[section]
\newtheorem{lem}[thm]{Lemma}
\newtheorem{defn}[thm]{Definition}
\newtheorem{rmk}[thm]{Remark}
\newtheorem{claim}[thm]{Claim}
\newcommand{\norm}[1]{\left\Vert#1\right\Vert}
\newcommand{\abs}[1]{\left\vert#1\right\vert}
\title{\sc all two-dimensional \\ expanding ricci solitons\footnote{MSC 2020: 53E20}}
\author{Luke T. Peachey and Peter M. Topping}
\date{5 December 2024}
\begin{document}

%
%\author{Peter M. Topping}
%\address[Peter M. Topping]{Mathematics Institute, Zeeman Building, University of Warwick, Coventry CV4 7AL}
% \email{P.M.Topping@warwick.ac.uk}

\parskip 8pt
\parindent 0pt

\maketitle

\begin{abstract}
The second author and H. Yin \cite{TY3} have developed a Ricci flow existence theory that gives
a complete Ricci flow starting with a  surface equipped with a conformal structure
and a nonatomic Radon measure as a 
volume measure.
%conformal factor. 
This led to the discovery of a large array of new expanding Ricci solitons \cite{TY3}. 
In this paper we use the recent uniqueness theory in this context, also developed by the second author and H. Yin \cite{TY4}, to give a complete classification of all expanding Ricci solitons on surfaces.
Along the way, we prove a converse to the existence theory that is not constrained to solitons: every complete Ricci flow on a surface over a time interval $(0,\ep)$ admits a $t\downarrow 0$ limit within the class of admissible initial data.
This makes surfaces the first nontrivial setting for Ricci flow in which a bijection can be given between the entire set of complete Ricci flows over maximal time intervals $(0,T)$, and a class of initial data that induces them.
\end{abstract}

\section{Introduction}

Amongst all Ricci flows, the ones that evolve in a self-similar manner, modulo scaling and reparametrisation, are distinguished.
These so-called Ricci soliton flows can be classified as shrinking, steady or expanding. Shrinking solitons arise in the study of finite-time singularities of Ricci flow
(see e.g. \cite{EMT}, \cite[\S 11]{Perelman}, \cite{Bamler_solitons}), 
while expanding solitons are central to our understanding of both the large-time behaviour of the flow
(see e.g. \cite[Conjecture 16.8]{formations}, \cite{ChenZhu2000}, \cite{lott}) and the small-time asymptotics of the flow as it desingularises rough initial data (see e.g. \cite{GHMS}, \cite{SchulzeSimon2013}, \cite{deruelle_GAFA}, \cite{gianniotis_schulze}, \cite{lavoyer}).

Given a complete smooth Riemannian manifold
$(M,g)$,
always assumed to be connected,
and  a complete vector field $X$ on $M$,
let $\{\phi_t : M \rightarrow M\}_{t>0}$ be the family of diffeomorphisms generated by the time-dependent vector field $-\frac{X}{t}$, with $\phi_1 =\id_M$. Defining $g(t) := t \phi_t^*(g)$ for $t>0$, we can compute
\begin{equation*}
\begin{aligned}
\frac{\partial }{\partial t}g(t) &= \phi_t^*(g) - \phi_t^*\mathcal{L}_X(g)\\
&= \phi_t^* \big[g-\mathcal{L}_X(g)+2\Ric(g)\big] -2\Ric(g(t)),
\end{aligned}
\end{equation*}

We thus see that such a $g(t)$ is a Ricci flow if and only if $(M,g,X)$ is an expanding Ricci soliton in the following sense.

\begin{defn} %[Elliptic]
\label{soliton_def1}
An expanding Ricci soliton is a triple $(M,g,X)$ where $(M,g)$ is a complete smooth Riemannian manifold, and $X$ is a complete smooth vector field on $M$, such that
\begin{equation}
\label{soliton_eq}
    2\Ric(g) - \mathcal{L}_X(g) + g = 0.
\end{equation}
We call the soliton trivial if $X$ is a Killing field.
\end{defn}
We will refer to the corresponding $g(t)= t \phi_t^*(g)$ as an expanding 
Ricci soliton flow.

The trivial expanding solitons are clearly just hyperbolic 
manifolds, suitably scaled.
%surfaces, scaled to have curvature $-\half$. 
Such solitons account for all expanding solitons on \emph{closed} 
manifolds
%surfaces 
\cite{ivey}.

Special examples of nontrivial solitons can be found by imposing a symmetry ansatz and reducing the soliton equation \eqref{soliton_eq} to an ODE. For example, one can consider 
the $O(2)$-symmetric Ricci soliton flow starting with a two-dimensional cone
(see e.g. \cite[Section 4, Chapter 2]{ChowKnopf} or \cite{GHMS}).
In fact, this example is a so-called \emph{gradient} Ricci soliton, which means that $X$ is the gradient of some function, possibly modulo a Killing field. When considering solitons on surfaces, 
the condition of being a gradient soliton imposes a significant simplification because it turns out to \emph{force} the soliton to satisfy a symmetry ansatz and means that a complete classification can be obtained by solving ODEs (see e.g. \cite{Bernstein_Mettler} or 
\cite[Chapter 3]{chow_soliton_book}). In the case of solitons in two dimensions that are both gradient and expanding, one can therefore read off all possible examples. This classification will also 
arise as a by-product of
%come for free from 
our analysis in Section \ref{grad_sect}. 

More sophisticated results about expanding solitons can be derived using PDE methods. 
We highlight the theorem of Deruelle \cite{deruelle_GAFA}, who showed that any Riemannian cone whose link is a smooth, simply connected, compact Riemannian manifold with curvature operator $\Rm\geq 1$ of general dimension can be evolved under Ricci flow as an expanding gradient Ricci soliton flow with non-negative curvature operator. 

%\cmt{Here is the theorem, just for reference:
%
%%\begin{thm}[Deruelle {\cite[Theorem 1.3]{deruelle_GAFA}}]
%\emph{Given any smooth, simply connected, compact Riemannian manifold $(X,g_X)$ with curvature operator $\Rm(g_X) \geq 1$, let $(C(X),d_{C(X)},o)$ denote the pointed metric cone with link $X$ and vertex $o$ (see \cite[Definition 1.1]{deruelle_GAFA}). Then there exists a unique expanding gradient Ricci soliton $(M,g,\nabla f)$ with non-negative curvature operator $\Rm(g)\geq 0$, whose associated Ricci soliton flow $g(t)$ is a complete Ricci flow out of the metric cone $C(X)$ in the sense that, for some $p \in M$, the pointed metric spaces $(M,d_{g(t)},p)$ converge to the pointed metric cone $(C(X),d_{C(X)},o)$ as $t \downto 0$ in the pointed Gromov-Hausdorff topology.
%}
%%\end{thm}
%}

Given earlier results such as Deruelle's theorem, one might develop intuition that expanding solitons tend to arise by running the Ricci flow starting with a cone. One aspect of our work, following earlier work of the second author and H. Yin \cite{TY3} is that this is very far from the full story.

The first objective  of this paper is to classify all expanding Ricci solitons, up to isometry, when the underlying manifold $M$ is two-dimensional.
For the purposes of this paper, we say that 
two %complete 
expanding Ricci solitons $(M_1,g_1,X_1)$ and $(M_2,g_2,X_2)$ are isometric if there exists an isometry $\vph:(M_1,g_1)\to (M_2,g_2)$ with
$\vph_*X_1=X_2$.
One could instead replace this final condition on $X_1, X_2$ by the requirement that 
$\vph_*X_1-X_2$ %\in \mathfrak{iso}(M_2,g_2)$ 
is a Killing field, in which case further identifications of the solitons we find would be required.

The solitons we construct in two dimensions induce solitons in higher dimensions by taking products of finitely many of these examples and finitely many previously-known expanding solitons (including Euclidean space of arbitrary dimension). 
%The number of solitons we construct is so large that, in some sense, most known expanding solitons arise from this construction.

In order to classify  expanding solitons, we will construct a correspondence between nontrivial expanding Ricci  solitons and the following geometric structure that is easier to classify.
\begin{defn}
\label{meas_exp_def}
A \emph{measure expander} is a triple $(M,\mu,X)$
consisting of a smooth surface $M$ equipped with a conformal structure, 
a  complete conformal vector field $X$ on $M$, 
and a nontrivial nonatomic Radon measure $\mu$ on $M$
that is expanding under the action of $X$ in the sense that if we integrate the vector field  $X$ to give diffeomorphisms $\psi_s:M\to M$, $s\in\R$, with $\psi_0$ the identity, then 
\begin{equation}
\label{expanding_wrt_X}
    \lie_X \mu = \mu,\text{ where }\lie_X \mu := \frac{d}{ds} \psi_s^*(\mu)\bigg|_{s=0},
\end{equation}
or equivalently 
\begin{equation}
\label{expanding_wrt_X2}
\psi_s^*(\mu)=e^s\mu. 
\end{equation}
We say that two measure expanders 
$(M_1,\mu_1,X_1)$ and $(M_2,\mu_2,X_2)$ are \emph{isomorphic} if there exists a conformal diffeomorphism
$\vph:M_1\to M_2$ such that $\vph_*\mu_1=\mu_2$ and $\vph_* X_1=X_2$.
\end{defn}

%Ee are therefore always assuming nontriviality in our measure expanders.}

Given a smooth surface $M$ with a conformal structure, 
%$c$
and a  complete conformal vector field $X$, 
the correspondence will be between conformal Riemannian metrics
$g$ that make $(M,g,X)$ into a nontrivial expanding Ricci soliton and nontrivial nonatomic Radon measures $\mu$ on $M$ that make $(M,\mu,X)$ into a measure expander.
In order to explain this correspondence,
we need to survey the theory of Ricci flow on surfaces equipped with a conformal structure and a nonatomic Radon measure developed recently by the second author and Yin \cite{TY3, TY4}.
In the following theorem it is important to understand that the Ricci flow on surfaces preserves the conformal structure; if this conformal structure $[g(t)]$ is the same as the underlying conformal structure on $M$ we say 
that $g(t)$ is a \emph{conformal} Ricci flow.
We use the notation $\ti M$ for the universal cover of $M$, and $\ti\mu$ for the corresponding lift of $\mu$ to $\ti M$.

\begin{thm}[Well-posedness of Ricci flow from measures. From {\cite{TY3, TY4}}]
\label{exist_uniq_thm}
Let $M$ be a two-dimensional smooth manifold equipped with a conformal structure, 
and let $\mu$ be a Radon measure on $M$ that is nonatomic in the sense that
$$\mu(\{x\})=0\quad\text{ for all }x\in M.$$
Define $T\in [0,\infty]$ by
\begin{itemize}
\item
$T=\infty$ if $\ti M=D$, the unit disc in the plane;
\item
$T=\frac{1}{4\pi}\ti\mu(\ti M)$ if $\ti M=\C$;
\item
$T=\frac{1}{8\pi}\ti\mu(\ti M)$ if $\ti M=S^2$.
\end{itemize}
If $T\in (0,\infty]$,
then there exists a smooth complete conformal Ricci flow $g(t)$ on $M$, for $t\in (0,T)$,
attaining $\mu$ as initial data in the sense that 
$$\mu_{g(t)}\weakto \mu\text{ as }t\downto 0,$$
where $\mu_{g(t)}$ denotes the volume measure of the metric $g(t)$.
Moreover, 
whatever the value of $T\in [0,\infty]$,
if $\ti g(t)$, $t\in (0,\ti T)$, is any  smooth complete conformal Ricci flow on $M$ that attains $\mu$ as initial data in the same sense, then 
$\ti T\leq T$ and 
$$g(t)\equiv \ti g(t)\qquad\text{ for all }t\in (0,\ti T).$$
If $T\in (0,\infty)$ then $\mu_{g(t)}(M)=(1-\frac{t}{T})\mu(M)$ for all $t\in (0,T)$.
\end{thm}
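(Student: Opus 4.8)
The plan is to reduce the whole statement to a scalar parabolic equation for the conformal factor and then deploy the theory of the logarithmic fast diffusion equation. Passing to the universal cover $\ti M\in\{D,\C,S^2\}$, fix a standard complete background metric $g_0$ in the conformal class (Euclidean if $\ti M\in\{D,\C\}$, round if $\ti M=S^2$) and seek the flow as $\ti g(t)=e^{2u}g_0$. In these terms $\pt g=-2\Ric(g)$ is equivalent, for $v:=e^{2u}$, to
\begin{equation*}
\pt v=\lap_{g_0}\log v-2K_{g_0},
\end{equation*}
which on the flat models is exactly $\pt v=\lap\log v$, and $\mu_{g(t)}\weakto\mu$ becomes $v(\cdot,t)\,d\mu_{g_0}\weakto\ti\mu$ equivariantly under the deck group. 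The statement thus splits into: (i) solving this equation with a nonatomic Radon measure as initial data; (ii) identifying the maximal existence time; (iii) uniqueness.

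For \emph{existence}, approximate $\ti\mu$ weakly by deck-invariant smooth positive densities $v_k\,d\mu_{g_0}$ and run, for each $k$, the instantaneously complete conformal Ricci flow $\ti g_k(t)=e^{2u_k}g_0$ emanating from it (the classical flow on $S^2$; the smooth existence theory for instantaneously complete conformal Ricci flows of surfaces, due to Giesen--Topping, on $D$ and $\C$). The crux is a family of a priori bounds uniform in $k$: applying the maximum principle to $\pt K=\lap_g K+2K^2$ gives the universal lower bound $K_{\ti g_k}(\cdot,t)\ge-\tfrac{1}{2t}$; a ``local mass controls the metric'' estimate bounds $v_k$ from above on compacta in terms of $t$ and the $\ti\mu$-mass of a fixed ball, with a matching positive lower bound from the curvature bound and a Harnack inequality. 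Together these give $C^\infty_{loc}$ compactness away from $t=0$, so a subsequence converges to a smooth complete conformal Ricci flow $g(t)$ on $(0,T_{\max})$; that it attains $\ti\mu$ follows from weak continuity of the $\ti g_k$ at $t=0$ and the uniform mass bounds.

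For the \emph{existence time and the volume formula}: when $\ti M=D$ the background is, up to scale, hyperbolic, $v$ is trapped between explicit super- and subsolutions tending to the hyperbolic conformal factor, and the flow is immortal, so $T=\infty$. When $\ti M\in\{\C,S^2\}$ the total lifted area decreases at a constant rate, since $\tfrac{d}{dt}\mu_{\ti g(t)}(\ti M)=-\int\Scal\,d\mu_{\ti g(t)}$ equals $-8\pi$ on $S^2$ by Gauss--Bonnet and $-4\pi$ in the $\C$ case by the flux computation for \emph{complete} solutions of $\pt v=\lap\log v$ (incomplete solutions, such as the shrinking stereographic sphere on $\C$, instead lose mass at rate $8\pi$, which is why instantaneous completeness has to be imposed). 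Integrating, the flow becomes extinct precisely at $T=\tfrac{1}{8\pi}\ti\mu(\ti M)$, resp.\ $\tfrac{1}{4\pi}\ti\mu(\ti M)$, and descending to $M$ one reads off $\mu_{g(t)}(M)=(1-\tfrac{t}{T})\mu(M)$ whenever $T<\infty$. The same rate computation, applied to an arbitrary flow attaining $\mu$, caps its lifetime by $T$.

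The \textbf{main obstacle} is uniqueness (the content of \cite{TY4}). Given a second conformal Ricci flow $\ti g_i=e^{2u_i}g_0$, $i=1,2$, attaining $\ti\mu$, the rate computation already gives $\ti T\le T$ for the shorter lifetime; it remains to prove the flows coincide on their overlap. For each fixed $\ep>0$ both are smooth complete conformal Ricci flows on $[\ep,\cdot)$ with initial metric $\ti g_i(\ep)$, so uniqueness of instantaneously complete Ricci flow in the smooth class reduces matters to showing $\ti g_1(\ep)$ and $\ti g_2(\ep)$ become arbitrarily close as $\ep\downto0$. Writing $w=v_1-v_2$, one has a divergence-form linear equation $\pt w=\lap_{g_0}(a\,w)$ with $a=(\log v_1-\log v_2)/(v_1-v_2)>0$, hence a formal $L^1$-contraction $\tfrac{d}{dt}\int|w|\le0$; the genuine difficulty is that $v_1(\cdot,t)\,d\mu_{g_0}\weakto\ti\mu$ and $v_2(\cdot,t)\,d\mu_{g_0}\weakto\ti\mu$ do \emph{not} by themselves force $\int|w(\cdot,t)|\to0$ as $t\downto0$. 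Closing the gap requires extra rigidity near $t=0$ --- sharp barrier comparison that pins down the short-time asymptotics of any solution in terms of its initial measure, together with the monotonicity above and a unique-continuation-type argument --- to promote weak convergence of the volume measures to the $L^1$ convergence needed to run the comparison. I expect precisely this promotion to be the technically demanding step.
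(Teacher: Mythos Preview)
This theorem is not proved in the present paper at all: it is stated with the attribution ``From \cite{TY3, TY4}'' and used thereafter as a black box. There is consequently no proof here against which to compare your proposal.

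That said, your outline is a broadly accurate high-level description of the strategy carried out in those references, and you have correctly identified the uniqueness half (the content of \cite{TY4}) as the main obstacle. Two comments on the sketch itself. First, the nonatomic hypothesis on $\mu$ is essential to uniqueness --- an atom can be resolved by Ricci flow in inequivalent ways --- and your $L^1$-contraction/barrier outline does not yet indicate where nonatomicity is used; pinning this down is one of the genuinely new ingredients in \cite{TY4}. Second, on the existence side, the local upper and lower volume/conformal-factor bounds you invoke (``local mass controls the metric'', Harnack) are themselves the substantial analytic content of \cite{TY3} rather than off-the-shelf inputs, so your sketch compresses a good deal of work into a sentence. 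As a roadmap, though, it is in the right spirit; it simply is not something this paper undertakes.
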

The case $T=0$ corresponds to the situation that ${\ti M}$ is $\C$ or $S^2$ and $\mu$ is the trivial measure. In this case the theorem is indirectly asserting that there could never exist a complete conformal Ricci flow with this initial data.

This theory allows us to characterise expanding solitons in terms of measures using the following theorem, the first part of which 
is a refinement of the ideas in 
\cite{TY3}, based on \cite{TY4}.

\begin{thm}[Expanding solitons correspond to measure expanders]
\label{exp_meas_corresp}
%Suppose $M$ is a smooth surface equipped with a conformal structure and a complete conformal vector field $X$.
%Suppose that $\mu$ is a nontrivial nonatomic Radon measure that is 
%expanding under the action of $X$ in the sense of
%\eqref{expanding_wrt_X}.
%
Suppose $(M,\mu,X)$ is a measure expander in the sense of 
Definition \ref{meas_exp_def}.
Then the unique complete conformal Ricci flow $g(t)$ on $M$ given by Theorem \ref{exist_uniq_thm} with $\mu$ as initial data exists for all $t>0$.  Moreover, 
$(M,g(1),X)$ is a nontrivial  expanding Ricci soliton in the sense of 
Definition \ref{soliton_def1} and $g(t)$ is an expanding Ricci soliton flow.

Conversely, given any nontrivial two-dimensional expanding Ricci soliton 
$(M,g,X)$ (which automatically induces a conformal structure)
there exists a 
unique
nontrivial nonatomic Radon measure $\mu$ on $M$ that 
makes $(M,\mu,X)$ a measure expander
and induces $(M,g,X)$ in the sense above when we run the Ricci flow until time $t=1$.
\end{thm}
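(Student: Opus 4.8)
The plan is to treat the two directions of Theorem~\ref{exp_meas_corresp} separately, each resting on the well-posedness Theorem~\ref{exist_uniq_thm} together with the self-similarity computation from the introduction. For the forward direction, suppose $(M,\mu,X)$ is a measure expander, so $\psi_s^*(\mu)=e^s\mu$ for the flow $\psi_s$ of $X$. First I would observe that the rescaling $\mu\mapsto e^s\mu$ does not change whether $T\in(0,\infty]$ or which universal cover case we are in, \emph{except} in the $\ti M=\C$ or $S^2$ cases it multiplies $T$ by $e^s$; but since $X$ also acts by conformal diffeomorphisms, $\psi_s$ pushes the Ricci flow with initial data $\mu$ to the Ricci flow with initial data $\psi_s^*(\mu)=e^s\mu$, and by the time-scaling symmetry of Ricci flow ($t\mapsto e^{-s}t$ combined with $g\mapsto e^{s}g$) these must be compatible. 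Feeding the uniqueness clause of Theorem~\ref{exist_uniq_thm} into this, one deduces the scaling identity
\begin{equation*}
\psi_s^*(g(t)) = e^{-s}\, g(e^{s}t)\qquad\text{for all }t>0,\ s\in\R,
\end{equation*}
which in particular forces $T=\infty$ (if $T$ were finite, the identity applied with $t$ near $T$ and $s>0$ would push the flow past its maximal time, contradicting $\ti T\leq T$). Differentiating this identity in $s$ at $s=0$ and setting $t=1$ gives $-\lie_X g(1) = -g(1) + \pt g(t)|_{t=1} = -g(1)-2\Ric(g(1))$, i.e. $2\Ric(g(1))-\lie_X g(1)+g(1)=0$, which is exactly \eqref{soliton_eq}; and nontriviality of $X$ as a Killing field would, via the same identity, force $\mu$ to be invariant, contradicting $\lie_X\mu=\mu\neq 0$. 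The fact that $g(t)=t\,\phi_t^*(g(1))$ then identifies $g(t)$ as the soliton flow.

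For the converse, suppose $(M,g,X)$ is a nontrivial expanding Ricci soliton. The associated soliton flow $g(t)=t\,\phi_t^*(g)$, $t\in(0,\infty)$, is by the introduction's computation a complete conformal Ricci flow on $M$, where the conformal structure is the one induced by $g$ (note $\phi_t$ need not be conformal for the induced structure, but $X$ being conformal for $[g]$—which I would verify from the soliton equation, since $\lie_X g = 2\Ric(g)+g$ and on a surface $\Ric(g)=\tfrac12\Scal\, g$ so $\lie_X g$ is pointwise proportional to $g$—makes each $\phi_t$ conformal). Now I would apply the ``converse'' half of Theorem~\ref{exist_uniq_thm}: a complete conformal Ricci flow on $(0,\ep)$ admits a $t\downarrow 0$ weak limit of its volume measures, call it $\mu$, which is a nonatomic Radon measure; since $g(t)$ exists for all $t>0$ we must have $T=\infty$, and consulting the trichotomy in Theorem~\ref{exist_uniq_thm} this is automatic in the $\ti M=D$ case and in the $\C$/$S^2$ cases forces $\ti\mu(\ti M)=\infty$, which in particular rules out the trivial measure, so $\mu$ is nontrivial. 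Uniqueness of $\mu$ is immediate because the weak limit of $\mu_{g(t)}$ is unique, and the uniqueness clause of Theorem~\ref{exist_uniq_thm} guarantees no other initial data induces $g(t)$. Finally, applying the scaling identity derived in the forward direction in reverse—or more directly, pushing forward the $t\downarrow 0$ limit under $\psi_s$ and using $\psi_s^*(g(t))=e^{-s}g(e^s t)$—yields $\psi_s^*(\mu)=e^s\mu$, so $(M,\mu,X)$ is a measure expander inducing $(M,g,X)$ at $t=1$; nonatomicity of $\mu$ passes to the universal cover since local lifts are diffeomorphisms.

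The main obstacle I anticipate is establishing the scaling identity $\psi_s^*(g(t))=e^{-s}g(e^st)$ rigorously, i.e.\ promoting the ``morally obvious'' self-similarity to a genuine consequence of the uniqueness theorem. The subtlety is twofold: one must check that $e^{-s}g(e^s\,\cdot)$ is again a \emph{complete} \emph{conformal} Ricci flow on $M$ (completeness is scaling-invariant and conformality requires $\psi_s$—equivalently $X$—to preserve $[g]$, which holds because $X$ is assumed conformal), and one must check it attains the correct initial data, namely $e^s\mu$, in the weak-$*$ sense—this is where one invokes that $\mu_{g(t)}\weakto\mu$ and that $\psi_s$, being a fixed diffeomorphism, is continuous for weak-$*$ convergence of measures. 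Once both flows $\psi_s^*(g(\cdot))$ and $e^{-s}g(e^s\,\cdot)$ are seen to be complete conformal Ricci flows with the same initial data $e^s\mu$, the uniqueness clause of Theorem~\ref{exist_uniq_thm} forces them to coincide on their common interval of definition, and a bootstrap on $s$ shows the interval is all of $(0,\infty)$. Everything else—differentiating in $s$, reading off \eqref{soliton_eq}, checking nontriviality, and the measure-theoretic bookkeeping on the universal cover—is routine once this identity is in hand.
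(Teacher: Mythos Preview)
Your approach is essentially the paper's: in both directions one compares two complete conformal Ricci flows with the same initial data, invokes the uniqueness clause of Theorem~\ref{exist_uniq_thm} to obtain the self-similarity identity, and for the converse extracts $\mu$ as the weak $t\downarrow 0$ limit of the soliton flow's volume measures and then pushes the identity to $t=0$. Two small corrections: the existence of that nonatomic weak limit for an \emph{arbitrary} complete Ricci flow is not contained in Theorem~\ref{exist_uniq_thm} but is the separate Theorem~\ref{time_zero_limit_thm} (whose final clause also handles nontriviality of $\mu$ in the $\ti M=D$ case, which your trichotomy argument misses), and your scaling identity has the signs reversed --- it should read $\psi_s^*(g(t))=e^{s}\,g(e^{-s}t)$, as one sees by taking volume measures of both sides and letting $t\downarrow 0$.
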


Note that although $X$ is not constrained explicitly to be nontrivial (for example) above, it will necessarily have to be so in order for $\mu$ to be expanding with respect to it.
%\cmt{This is a bit odd because there is no real difference between $X\equiv 0$ and $X$ Killing, but 
%we cannot talk about Killing in the first part when there is no metric! Seems OK though. If we delete this paragraph then we have to delete that $X$ is nontrivial below, and handle this in the proof later.}

Theorem \ref{exp_meas_corresp} reduces the nontrivial expanding soliton classification problem 
in two dimensions to the problem of finding all surfaces $M$ with a conformal structure, nontrivial complete conformal vector field $X$ and nontrivial nonatomic measure $\mu$ that is expanding with respect to $X$.
The task of finding such $(M,\mu,X)$ was initiated in \cite{TY3}, but we now give 
a full classification up to isomorphism as in Definition
\ref{meas_exp_def}.
%

%the full story. 
%We carry this out up to the notion of equivalence corresponding to the notion of equivalence of solitons that we adopted earlier. 
%That is, we say $(M_1,\mu_1,X_1)$ and $(M_2,\mu_2,X_2)$ are isomorphic if there exists a conformal diffeomorphism
%$\vph:M_1\to M_2$ such that $\vph_*\mu_1=\mu_2$ and $\vph_* X_1=X_2$.

%\cmt{implicit above: if $(M_1,g_1,X_1)$ and $(M_2,g_2,X_2)$ are equivalent in the sense before, then if we stick them both into Theorem \ref{exp_meas_corresp}, to get measures $\mu_1$ and $\mu_2$, then $(M_1,\mu_1,X_1)$ and $(M_2,\mu_2,X_2)$ are isomorphic. This is because the isometry $\vph$ also relates the soliton flows, and then also the initial measures. 
%
%Conversely, we need to check that if $(M_1,\mu_1,X_1)$ and $(M_2,\mu_2,X_2)$ are isomorphic, with $\mu_1$ and $\mu_2$ expanding wrt the corresponding $X_i$, then when we flow and restrict to time $1$, we get metrics that are related by the same diffeo $\vph$ coming from the definition of isomorphic. This is also obvious because the whole flows are related by $\vph$.}

Before we give the complete list of measure expanders, some examples are in order. Probably the simplest example is to work on the plane and consider the Lebesgue measure scaled by $e^x$, with $X=\pl{}{x}$. We can equally well take the quotient by a group of vertical translations generated by $(x,y)\mapsto (x,y+2\pi \al)$. This latter example is identifiable as the cone of cone angle $\pi \al$, without the vertex at $x=-\infty$. That is, if we view this cone conformally on the cylinder then its volume measure is the  measure
we are considering, and the soliton that flows out of it is the well-known soliton that has the topology of a cylinder, with 
one end asymptotic to a hyperbolic cusp (scaled to have curvature $-\half$) and with the other end 
asymptotic to a cone of angle ${\pi \al}$.
See, for example, \cite[Theorem 1.1, 3c]{Bernstein_Mettler}.
These examples are extremely special cases of families (Bi) and (Bii) in Theorem \ref{meas_exp_class} below.

Meanwhile, we could take one of these cones and add in the vertex to generate a different type of soliton. More precisely, given the volume measure of a cone, viewed on a cylinder as above, we can first change viewpoint by seeing the cylinder conformally as a punctured plane, and then add in the origin to change the underlying space. In this case the measure expander on the punctured plane is immediately a measure expander on the full plane, and the new soliton is the familiar soliton that smooths out the vertex of the cone
(see, for example, \cite[Theorem 1.1, 3a and 3b]{Bernstein_Mettler}
and the Gaussian soliton \cite[\S 1.2.2]{RFnotes}). This example is a very special case of family (Ci) in Theorem \ref{meas_exp_class} below.

In the following theorem, we classify measure expanders into families (A), (B) and (C).
Elements of family (A) will be conformally the disc.
Elements of families (B) and (C) will have the plane as their universal cover, and are distinguished by whether the vector field $X$ has a stationary point or not.
Each $M$ will either be of the form $\R\times N$, for 
$N$  one of $(0,\infty)$, $(0,\pi)$, $\R$ or $S^1=\R / 2\pi \Z$, where $N$ has the corresponding standard metric and $\R\times N$ has the corresponding standard conformal structure, or $M$ will be $\C$, 
viewed conformally as 
the cylinder $\R\times N$, for $N=S^1$, together with a point at $-\infty$ corresponding to the origin. As we proceed we will keep track of the isometry group of each $N$, which will be relevant afterwards to identify isomorphic measure expanders.
%
%Throughout this paper, we take $S^1 = \R / 2\pi \Z$, equipped with the orientation it inherits from $\R$.
For any manifold $N$, we let $\mathcal{R}(N)$ denote the collection of nontrivial Radon measures on $N$.
%
%We consider products $\R\times N$ with the standard conformal structure. 
We abuse notation by viewing volume forms  $dx$ and $e^{x}dx$ on $\R$ as Lebesgue measure $m$ and the weighted measure $m\llcorner e^x$ respectively.

\begin{thm}[Classification of measure expanders]
\label{meas_exp_class}
Every measure expander $(M,\mu,X)$ 
%Suppose that $M$ is a smooth surface equipped with a conformal structure, $X$ is a complete conformal vector field on $M$, and $\mu$ is a nontrivial nonatomic Radon measure on $M$ satisfying \eqref{expanding_wrt_X}. 
%Then $(M,\mu,X)$ 
is isomorphic to an element of one of the following families of measure expanders:

\begin{enumerate}

\item[(Ai)] 
$$\left\{ \left(\mathbb{R} \times (0,\infty), e^x dx \otimes \nu, \frac{\partial}{\partial x} \right) \ \vert \ \nu \in \mathcal{R}((0,\infty)) \right\},$$
$N = (0,\infty)$ has trivial isometry group.

\item[(Aii)] 
$$\left\{ \left(\R \times (0,\pi), e^{\al x} dx \otimes \nu, \frac{1}{\al} \pl{}{x} \right) \ \vert \ \al>0,  \nu \in \mathcal{R}((0,\pi)) \right\},$$
$N = (0,\pi)$ has isometry group $\Z_2$.

\item[(Bi)]
$$\left\{ \left(\R \times \R, e^x dx \otimes \nu, \frac{\partial}{\partial x}\right) \ \vert \ \nu \in \mathcal{R}(\mathbb{R})\right\},$$
$N = \R$ has isometry group $\R \rtimes \Z_2$.

\item[(Bii)]
$$\left\{ \left(\R \times S^1, e^{\al x} dx \otimes \nu, \frac1{\al}\pl{}{x}
\right) \ \vert \ \al > 0, \nu \in \mathcal{R}(S^1) \right\},$$
$N = S^1$ has isometry group $O(2)$.

\item[(Biii)]
The same as (Bii), but with the conformal structure on $\R\times S^1$ being the pull-back of the standard product conformal structure by a twisting diffeomorphism $F_\be$ of $\R\times S^1$ defined by $F_\be(x,\th)=(x, \th+\be x)$, for some $\be>0$,
or equivalently
$$\left\{ \left(\R \times S^1,  (F_\be)_* (e^{\al x}dx \otimes \nu), \frac1{\al}\pl{}{x} + \frac{\be}{\al}\pl{}{\th}
\right) \ \vert \ \al > 0, \be > 0, \nu \in \mathcal{R}(S^1) \right\}.$$

$N = S^1$ has orientation-preserving isometry group $SO(2)$.

\item[(Ci)]
The same as (Bii) with the puncture at $x=-\infty$ filled in. In other words, the pushforward of any of the measure expanders from (Bii) under the injective conformal map 
\begin{equation}
\label{inj_conf_map}
\R \times S^1 \to \C, \quad (x , y) \mapsto e^{x+i y}.
\end{equation}
Alternatively, if we consider a complex coordinate
$z=e^{x+i\th}$ on $\C$, then we can write these as 
$$\left\{ \left(\C,
e^{\al x} dx \otimes \nu, \frac1{\al}\pl{}{x}
\right) \ \vert \ \al > 0, \nu \in \mathcal{R}(S^1) \right\}.$$

\item[(Cii)]
The same as (Biii) with the puncture at $x=-\infty$ filled in. In other words, the pushforward of any of the measure expanders from (Biii) under the injective conformal map 
\eqref{inj_conf_map}.
%$$ \R \times S^1 \to \C, \quad (x,y) \mapsto e^{(x+yi)}.$$
Alternatively, if we consider a complex coordinate
$z=e^{x+i\th}$ on $\C$, then we can write these as 
$$\left\{ \left(\C,
(F_{\be})_* (e^{\al x} dx \otimes \nu) , \frac1{\al}\pl{}{x} + \frac{\be}{\al} \pl{}{\th}
\right) \ \vert \ \al > 0, \be>0, \nu \in \mathcal{R}(S^1) \right\}.$$
\end{enumerate}

Moreover, any two measure expanders  $(M_1,\mu_1,X_1)$, $(M_2,\mu_2,X_2)$ in the above list are isomorphic if and only if they both belong to the same family in the list above (so $M_1 = M_2$), $X_1 = X_2$, and the measures on the fibres $\nu_1,\nu_2 \in \mathcal{R}(N)$ satisfy $$\varphi_*(\nu_1) = \lambda \cdot \nu_2,$$
for some $\lambda > 0$ and an isometry of the fibre $\varphi \in \textnormal{Isom}(N)$,
where $\vph$ is assumed to be orientation preserving in cases (Biii) and (Cii).
\end{thm}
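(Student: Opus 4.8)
The plan is to start from an arbitrary measure expander $(M,\mu,X)$ and use the conformal vector field $X$ together with the expansion law \eqref{expanding_wrt_X2} to pin down the underlying surface, conformal structure, and vector field, and only then read off the measure. First I would recall the classification of complete conformal vector fields on simply connected Riemann surfaces: passing to the universal cover $\ti M$ (which is $D$, $\C$, or $S^2$), lift $X$ to a complete conformal vector field $\ti X$. Since $\mu$ is nontrivial and $\lie_X\mu=\mu$, the field $X$, hence $\ti X$, cannot be a Killing field for any compatible metric (a Killing field preserves volume measures, contradicting the strict expansion); in particular $\ti M\neq S^2$, because every conformal vector field on $S^2$ that generates a one-parameter group either is a rotation (Killing) or a hyperbolic/loxodromic flow — but the latter has fixed points and cannot support a globally expanding nonatomic Radon measure of finite or infinite total mass consistently, which I would rule out by a separate short argument (near a hyperbolic fixed point the measure of a small disc would have to both shrink and grow). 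So $\ti M\in\{D,\C\}$.

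Next I would use the flow $\psi_s$ of $X$ to put things in normal form. The key step is that a complete conformal vector field generating a one-parameter group of conformal automorphisms of $D$ or $\C$ is, up to conformal change of coordinate, one of a short explicit list. On $\C$ the complete conformal vector fields are the affine ones $a z\pl{}{z}+b\pl{}{z}$ (with $a,b\in\C$); completeness of the flow and the requirement that $\mu$ be expanding force the real part of the flow to be a genuine dilation-type flow, giving either $X$ a zero (the case $b=0$, leading to families (C)) or $X$ nowhere zero (the translational case, leading to families (B)). On $D$ the complete conformal vector fields are the elements of $\mathfrak{su}(1,1)$; the elliptic ones are Killing and excluded, the parabolic ones fix a boundary point and one checks they cannot expand a nontrivial Radon measure, so $X$ must be hyperbolic, and conjugating to the upper half-plane / strip model one obtains $X=\partial/\partial x$ (or $\tfrac1\al\partial/\partial x$) on $\R\times N$ with $N$ an interval — families (A), where $N=(0,\infty)$ or, after further normalisation of the strip width and absorbing a scaling into $\al$, $N=(0,\pi)$. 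In each surviving case the quotient $M=\ti M/\Gamma$ must be chosen so that $X$ descends: $\Gamma$ is generated by a conformal automorphism commuting with the flow of $X$, which in the $\C$ translational case is a vertical translation (giving the cylinder $\R\times S^1$, possibly with a twist $F_\be$ since the generator need only commute with $X$ up to the flow, producing (Biii)/(Cii)), and in the $D$ case is trivial.

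Once $M$, $[g]$, and $X$ are in one of these normal forms, decompose $\mu$ along the flow. Writing $M=\R\times N$ (resp. $\C$ with the puncture filled), the flow $\psi_s$ is translation in the $\R$-factor (after rescaling by $\al$), so \eqref{expanding_wrt_X2} says $\mu$ is, in the variable $x$, a measure transforming by $e^{s}$ under translation by $s$; by a standard disintegration/Fubini argument this forces $\mu = e^{\al x}dx\otimes\nu$ for a unique $\nu\in\mathcal{R}(N)$ — here I would invoke uniqueness of the disintegration of a Radon measure with respect to the projection to $N$, and solve the functional equation $f(x+s)=e^s f(x)$ for the $\R$-density. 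This yields exactly the listed families. Finally, for the isomorphism statement: a conformal diffeomorphism $\vph$ intertwining $(M_1,X_1)$ with $(M_2,X_2)$ must preserve the conformal structure and send the flow of $X_1$ to that of $X_2$, hence (by the normal-form rigidity just established, and since the families are distinguished by $\ti M$, by whether $X$ vanishes, and by the twist parameter $\be$ which is a conformal invariant) it forces $M_1=M_2$ within the list and $X_1=X_2$; such a $\vph$ is then of the form $(x,y)\mapsto(x+c,\varphi_0(y))$ with $\varphi_0\in\mathrm{Isom}(N)$ (orientation-preserving in the twisted cases, since a reflection would reverse the sign of $\be$), and pushing forward $e^{\al x}dx\otimes\nu_1$ gives $e^{\al c}\,e^{\al x}dx\otimes(\varphi_0)_*\nu_1$, whence $\lambda=e^{\al c}$ and $(\varphi_0)_*\nu_1=\lambda^{-1}\nu_2$, i.e. the stated relation after relabelling. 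Conversely any such data visibly gives an isomorphism. I expect the main obstacle to be the rigidity step — showing that the complete conformal vector field $X$ can genuinely be conjugated to the claimed normal form, and that the only permissible deck groups are the ones listed (the twist $F_\be$ is the subtle point, since one must recognise that the generator of $\Gamma$ need only commute with $X$ rather than be a pure translation, and that $\be$ cannot be removed by a conformal change of coordinates) — together with the clean exclusion of the parabolic/elliptic and $S^2$ cases, which requires the little lemma that a nontrivial nonatomic Radon measure cannot be expanded by such flows.
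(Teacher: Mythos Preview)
Your overall strategy --- lift to the universal cover, classify the complete conformal vector field up to conjugacy, then decompose the measure via a disintegration along the flow --- is exactly the paper's approach, and your treatment of the isomorphism statement (Lemma~\ref{lem_split_isom} in the paper) is correct.

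However, there is a genuine error in your handling of the disc case. You assert that parabolic one-parameter subgroups of $\mathrm{Aut}(D)$ ``cannot expand a nontrivial Radon measure'', and conclude that $X$ must be hyperbolic. This is false: family (Ai) is precisely the parabolic case. On the upper half-plane $\mathbb{H}=\R\times(0,\infty)$, the vector field $X=\partial/\partial x$ generates horizontal translations, which are parabolic (one boundary fixed point at $\infty$), and the measure $e^x\,dx\otimes\nu$ satisfies $\psi_s^*\mu=e^s\mu$ for any $\nu\in\mathcal{R}((0,\infty))$. Your argument would exclude (Ai) entirely, yet you then list $N=(0,\infty)$ among the outcomes of the ``hyperbolic'' normal form, which is internally inconsistent: a hyperbolic vector field on $\mathbb{H}$ conjugates to $\frac{r}{\alpha}\partial/\partial r$, and under $z\mapsto(\log r,\theta)$ this becomes $\frac{1}{\alpha}\partial/\partial x$ on the \emph{strip} $\R\times(0,\pi)$, giving only (Aii). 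The paper distinguishes (Ai) and (Aii) exactly by the parabolic/hyperbolic dichotomy. What is actually excluded on $D$ is the \emph{elliptic} case (Killing, as you note); both parabolic and hyperbolic survive.

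Two smaller points. Your exclusion of $S^2$ is more complicated than necessary: the paper simply observes that $\mu(S^2)<\infty$ forces $\mu(S^2)=e^s\mu(S^2)$ for all $s$, a contradiction. And your description of the origin of the twist is slightly off: the deck transformation must preserve $\tilde X$ exactly (not ``up to the flow''); the twist $\beta$ arises because after normalising the deck translation to be purely vertical on $\C$, the vector field $\tilde X$ itself need not be purely horizontal.
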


%\cmt{I'm not proposing a change here, but just to note that we did not discuss $N$ in the last 2 families.}
\medskip

\begin{rmk}
To clarify, in parts (Ci) and (Cii), we push forward both the measure and the vector field to $\C$. In both cases the pushed forward vector field extends across the origin as a smooth complete vector field on the whole of $\C$ and we are left with new measure expanders on the larger space.
\end{rmk}

Implicit in Theorem \ref{meas_exp_class} is that every element of every family is indeed a measure expander. This will follow instantly from Corollary \ref{lem_split}, keeping in mind Remark \ref{not_nec_conf_rmk}.
It seems that no expanding soliton in the twisted family (Biii) has ever been considered before.

The second part of Theorem \ref{exp_meas_corresp} is proved by showing that 
after using a Ricci soliton $(M,g,X)$ to generate a Ricci soliton flow $g(t)$, we can take a limit of the flow as $t\downto 0$ to obtain a measure that encodes the soliton flow. In fact, we prove something much stronger in this paper that is nothing to do with solitons.

\begin{thm}[{Time zero  limits of complete Ricci flows}]
\label{time_zero_limit_thm}
Suppose $M$ is a smooth surface and $g(t)$ is any smooth complete  Ricci flow on $M$ for $t\in (0,\ep)$, for some $\ep>0$. 
Then there exists a nonatomic Radon measure $\mu$ on $M$ such that 
\begin{equation}
\label{mu_lim}
\mu_{g(t)}\weakto \mu
\end{equation}
as $t\downto 0$.
The measure $\mu$ is nontrivial unless the universal cover of $M$ is the disc and $g(t)=2th$ for $h$ a complete hyperbolic metric on $M$.
\end{thm}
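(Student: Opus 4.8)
\textbf{Overall strategy.} The plan is to reduce the statement to the existence half of Theorem \ref{exist_uniq_thm} run ``backwards in time'', combined with a compactness argument for the volume measures $\mu_{g(t)}$ on a fixed time slice near $t=0$. Fix a reference time $t_0\in(0,\ep)$ and work on the universal cover $\ti M$, which by uniformisation is one of $D$, $\C$, $S^2$; the measure will then be obtained downstairs by invariance under the deck group, so it suffices to produce the $t\downto 0$ limit of $\mu_{\ti g(t)}$ on $\ti M$. The key point is that a complete Ricci flow on a surface over $(0,\ep)$ automatically controls the total area on each slice: in the $S^2$ case area is strictly decreasing at rate $8\pi$, in the $\C$ case it is decreasing (possibly to $+\infty$ as $t\downto 0$ only in a controlled linear way), and in the $D$ case one uses that $g(t)\geq$ a hyperbolic metric eventually forces a lower barrier. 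So first I would establish, using the evolution equation $\pt\,\mathrm{dV}=-\Scal\,\mathrm{dV}$ and the known scalar curvature bounds for complete surface Ricci flows (the Chen/Topping lower bound $\Scal\geq -\tfrac1{2t}$ on any complete surface Ricci flow, and the finite-area monotonicity when $\ti M\in\{\C,S^2\}$), that the measures $\{\mu_{\ti g(t)}\}_{t\in(0,t_0]}$ have locally uniformly bounded mass on $\ti M$ as $t\downto 0$.

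\textbf{Extracting the limit.} Given that local uniform mass bound, by Banach–Alaoglu / Prokhorov I would extract a subsequence $t_j\downto 0$ along which $\mu_{\ti g(t_j)}\weakto \nu$ for some Radon measure $\nu$ on $\ti M$. To upgrade subsequential convergence to full convergence I would invoke the uniqueness half of Theorem \ref{exist_uniq_thm}: the limiting measure $\nu$ is automatically nonatomic (a point mass in the limit would force a curvature/area blow-up contradicting the lower bound on $\Scal$ together with completeness — this is exactly the mechanism that rules out atoms in \cite{TY3,TY4}, and concretely one shows $\mu_{\ti g(t)}(B)\to 0$ for any single point as the metric balls shrink), and hence $\nu$ is admissible initial data in the sense of Theorem \ref{exist_uniq_thm}. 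Running the existence+uniqueness theory forward from $\nu$ produces a flow that must coincide with $\ti g(t)$ for small $t$, by the uniqueness clause ``$g(t)\equiv\ti g(t)$''. Since this identification does not depend on the chosen subsequence, every subsequential limit of $\mu_{\ti g(t)}$ equals the volume measure of this canonical flow's initial data, so the full limit \eqref{mu_lim} exists. Passing back down to $M$: the lift $\ti\mu$ is deck-invariant (the flow $\ti g(t)$ is, by uniqueness applied to pushforwards under deck transformations), so it descends to the desired nonatomic $\mu$ on $M$.

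\textbf{The trivial case.} Finally, for the nontriviality dichotomy: if $\ti M=D$ the constructed limit could be the zero measure, and by the $T=0$ discussion following Theorem \ref{exist_uniq_thm} this happens precisely when the canonical flow induced by $\nu$ is the one with $T=\infty$ degenerating to nothing — I would identify this case directly by showing $\mu=0$ forces $g(t)$ to be, for small $t$, the flow $2th$ with $h$ hyperbolic (this is the unique conformal Ricci flow on $M$ whose area measure vanishes as $t\downto0$, since any nonzero limit measure gives, via Theorem \ref{exist_uniq_thm}, a flow with strictly positive area near $t=0$), and conversely $g(t)=2th$ has $\mu_{g(t)}=2t\,\mu_h\weakto 0$. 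In the cases $\ti M\in\{\C,S^2\}$ the total mass downstairs is $4\pi t_0/\!\!$(something positive) respectively $8\pi t_0$ plus the decrease, hence strictly positive, so $\mu\neq 0$ automatically.

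\textbf{Main obstacle.} The crux — and the step I expect to be the real work — is the local uniform mass bound as $t\downto 0$, i.e.\ ruling out that area escapes to infinity on compact sets of $\ti M$ (equivalently, proving local non-collapsing/non-inflating of the conformal factor from below at time $0$). This is immediate from area monotonicity when $\ti M\in\{\C,S^2\}$, but in the $\ti M=D$ (hyperbolic-type) case one genuinely needs an a priori lower barrier for the conformal factor coming from the completeness of the flow and the lower scalar curvature bound, essentially the ``no sudden loss of completeness'' phenomenon underlying \cite{TY3}. Everything downstream (nonatomicity of $\nu$, uniqueness-driven upgrade to full convergence, the trivial-case analysis) is then a matter of quoting Theorem \ref{exist_uniq_thm} and the standard weak-compactness of Radon measures.
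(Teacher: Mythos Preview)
Your proposal has a genuine circularity in the step where you upgrade subsequential convergence to full convergence. You extract a subsequential limit $\nu$ along $t_j\downarrow 0$, argue it is nonatomic, and then want to invoke the uniqueness clause of Theorem~\ref{exist_uniq_thm} to conclude that the flow generated by $\nu$ coincides with $\ti g(t)$. But that uniqueness clause applies only to flows that \emph{attain $\nu$ as initial data in the sense that $\mu_{\ti g(t)}\weakto\nu$ as $t\downarrow 0$} --- full convergence, not convergence along a subsequence. Since full convergence is precisely what you are trying to prove, you cannot feed $\ti g(t)$ into the uniqueness statement. (A stability statement for the flow with respect to weak-$*$ convergence of initial measures would close this gap, but no such result is provided by Theorems~\ref{exist_uniq_thm} or \ref{ordered_thm}.) The paper avoids this entirely: once a subsequential limit is in hand, it shows directly that $t\mapsto \int\psi\,d\mu_{g(t)}$ has integrable time derivative on $(0,\delta)$ for every $\psi\in C_c^\infty$, using the lower barrier $u\geq \eta t$ for the conformal factor (from monotonicity of $u/t$) together with the local volume bound, which forces continuity of the limit at $t=0$.

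Two further points. First, your local mass bound is not ``immediate from area monotonicity'' except on $S^2$: on $\C$ the total area can be infinite, and Gauss--Bonnet gives you nothing about mass on a fixed compact set; on $D$ you correctly flag this as the hard case but offer no mechanism. The paper handles all three cases at once by restricting $\mu_{g(t_n)}$ to the unit disc, viewing it as a measure on $S^2$, running the resulting sphere flow, and comparing with $g$ via the maximally-stretched property (Theorem~\ref{ordered_thm}); a quantitative volume lemma on $S^2$ then transfers large mass at time $t_n$ to large mass at a fixed later time, giving a contradiction. Second, your nonatomicity sketch (``a point mass would force curvature blow-up contradicting the lower bound on $\Scal$'') does not work as stated: the Chen-type bound $\Scal\geq -1/(2t)$ is a \emph{lower} bound and says nothing against large positive curvature concentrating. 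The paper instead shows, again via the sphere comparison and Theorem~\ref{ordered_thm}, that a point mass in the $t\downarrow 0$ limit would propagate to a point mass of the smooth volume measure at a fixed positive time, which is absurd.
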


We emphasise that for \emph{any} complete Ricci flow $g(t)$ on a surface, for $t\in (0,\ep)$, we are obtaining a $t\downto 0$ limit 
that is of the generality that the existence and uniqueness theorem 
\ref{exist_uniq_thm} can handle. 
For each smooth surface $M$, this is implying a one-to-one correspondence between
complete Ricci flows on $M$ over some time interval $(0,\ep)$ 
%(where we equate two flows on different such time intervals if they agree while they are both defined) 
and the initial data that generates them.
% via Theorem \ref{exist_uniq_thm}. 
More precisely, given $M$, we can consider initial data as
a combination of a conformal structure on $M$ and a nonatomic Radon measure on $M$ that is only permitted to be trivial in the case that the universal cover of $M$ with this conformal structure is the disc.
Our theory shows that the entire class of such initial data is in one-to-one correspondence with the space of 
complete Ricci flows on $M$ over maximal time intervals $(0,T)$, with the natural bijection arising by applying Theorem \ref{exist_uniq_thm}.
%For each smooth surface $M$ equipped with a conformal structure, this is implying a one-to-one correspondence between
%complete conformal Ricci flows on $M$ over some time interval $(0,\ep)$ (where we equate two flows on different such time intervals if they agree while they are both defined) and the initial data that generates them via Theorem \ref{exist_uniq_thm}. 
%More precisely, if $M$ has $D$ as its universal cover then this space of flows is in one-to-one correspondence with the space of nonatomic Radon measures, while for every other $M$ this space of flows is in one-to-one correspondence with the space of \emph{nontrivial} nonatomic Radon measures.
This begs the question as to what the analogue of our correspondence will be in higher dimensions.

%{\color{blue}
\emph{Acknowledgements:} This work was supported by EPSRC grant EP/T019824/1.
The second author thanks Hao Yin for conversations on this topic.
For the purpose of open access, the authors have applied a Creative Commons Attribution (CC BY) licence to any author accepted manuscript version arising.
%}

\section{Time zero limits of complete Ricci flows}

In this section we prove Theorem \ref{time_zero_limit_thm}. Many of the ingredients are refinements of estimates taken from \cite{TY3}. 
The argument we give for the existence of some limiting Radon measure (not necessarily nonatomic) is a streamlined version of an argument in \cite{peachey_thesis}.

We begin by noting that if we can prove the  existence of $\mu$ then the final claim, that $\mu$ is nontrivial except possibly if the universal cover of $M$ is the disc, follows immediately from Theorem \ref{exist_uniq_thm}.

We next  prove the existence of some Radon measure $\mu$ satisfying \eqref{mu_lim}, without worrying whether or not it is nonatomic.
It suffices to prove that each point $p\in M$ admits a
neighbourhood in which we can find such a limit. 
The global limit can then be constructed using a standard partition of unity argument.
Note that we are not fully localising the problem here because 
the remainder of the argument requires the global completeness of $g(t)$.

By lifting to the universal cover, we may as well assume that $M$ is either $\R^2$, $B_2$ or $S^2\simeq \C\union\{\infty\}$.
By adjusting by a biholomorphic transformation, we may as well assume that the point $p$ corresponds to the origin. 
We then task ourselves with proving \eqref{mu_lim} on the unit disc $D=B_1$.

The first step to achieve this will be to prove volume estimates. We need to verify that the volume $\mu_{g(t)}(D)$ is controlled from above for small time $t$:

\begin{claim}
\label{finite_vol_claim}
$$\limsup_{t\downto 0}\mu_{g(t)}(D)<\infty.$$
\end{claim}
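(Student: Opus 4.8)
\textbf{Proof proposal for Claim \ref{finite_vol_claim}.}

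The plan is to exploit the standard distance-distortion estimate for Ricci flow on surfaces, which bounds how fast distances can shrink, in order to trap the flow inside a fixed larger disc and then use the conformal structure to control the volume there. Since we have lifted to the universal cover and reduced to $M$ being $\R^2$, $B_2$ or $S^2$, the point $p$ is the origin, and we want to estimate $\mu_{g(t)}(D)$ with $D=B_1$. Write $g(t) = e^{2u(t)}g_0$ for the relevant flat or round background metric $g_0$ on the ambient surface, so $\mu_{g(t)}(D) = \int_D e^{2u(t)}\,d\mu_{g_0}$, and the claim is equivalent to a uniform (as $t\downto 0$) upper bound on $\int_D e^{2u(t)}\,d\mu_{g_0}$.

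First I would invoke a lower bound on the scalar curvature: for any complete Ricci flow on a surface over $(0,\ep)$ one has $\Scal_{g(t)} \ge -\frac{1}{t}$ (this is the standard consequence of the maximum principle applied to the evolution equation $\pt \Scal = \lap_{g(t)}\Scal + \Scal^2$, valid in the complete setting by Chen's work on the lower bound of scalar curvature, and it is exactly the kind of estimate from \cite{TY3} alluded to in the section preamble). In two dimensions $\pt u = -\Scal_{g(t)}$ up to a factor, so $\Scal_{g(t)}\ge -C/t$ translates into $\pt u \le C/t$, which upon integrating from $t$ to $\ep/2$ (say) gives a \emph{one-sided} control $u(x,t) \ge u(x,\ep/2) - C\log(\ep/2) + C\log t$; this is a \emph{lower} bound on $u$, not directly what we need, so the real work is elsewhere. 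The genuinely useful input is the upper volume-ratio / pseudolocality-type bound: for small $t$, distances measured by $g(t)$ from the origin to the boundary of a fixed $g_0$-disc $B_2$ cannot have collapsed too much, more precisely $d_{g(t)}(0,\partial B_2)\ge \sqrt{t}$ fails only under curvature concentration that we rule out. Rather than pseudolocality, the cleanest route is: use the sharp two-dimensional estimate (again from the circle of ideas in \cite{TY3}) that the $g(t)$-volume of the $g_0$-disc $B_1$ is controlled by the $g(s)$-volume of $B_2$ for $s$ slightly larger than $t$, together with the monotonicity/almost-monotonicity of areas of fixed domains under surface Ricci flow coming from $\frac{d}{dt}\mu_{g(t)}(B_2) = -\int_{B_2}\Scal_{g(t)}\,d\mu_{g(t)} \le \int_{B_2}\frac{1}{t}\,d\mu_{g(t)}$, i.e. $\frac{d}{dt}\log\mu_{g(t)}(B_2)\ge -\frac{1}{t}$, hence $t\mapsto t\,\mu_{g(t)}(B_2)$ is... — careful, the sign: $\frac{d}{dt}\mu_{g(t)}(B_2)\le \frac{1}{t}\mu_{g(t)}(B_2)$ gives $\frac{d}{dt}(t^{-1}\mu_{g(t)}(B_2))\le 0$ only after also handling the boundary flux of $\Scal$, which on a fixed domain is not a closed computation. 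So I would instead integrate $\pt u \le C/t$ pointwise: this yields $e^{2u(x,t)}\le e^{2u(x,\ep/2)}\cdot(\ep/2t)^{2C}$, which blows up as $t\downto 0$ and is therefore \emph{not} enough by itself — confirming that the pointwise scalar lower bound must be supplemented.

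The step I expect to be the main obstacle, and the one I would spend the most care on, is obtaining a \emph{uniform-in-$t$} upper bound for $\mu_{g(t)}(D)$ as $t\downto 0$, since the naive pointwise estimates degenerate. The right tool is an \emph{upper} bound on the conformal factor on a slightly smaller disc in terms of the \emph{total volume on a slightly larger disc}, of the form
\begin{equation*}
\sup_{B_1} e^{2u(\cdot,t)} \ \le\ C(\ep)\Big(1 + \tfrac{1}{t}\,\mu_{g(t)}(B_2)\Big)\cdot(\text{bounded factor}),
\end{equation*}
combined with a bound on $\mu_{g(t)}(B_2)$ that does not deteriorate. Concretely I would: (i) fix a time $t_0\in(0,\ep)$ and note $\mu_{g(t_0)}(B_2)<\infty$ since $g(t_0)$ is a smooth metric; (ii) show the area of $B_2$ under the flow does not increase too fast going \emph{backward}, using that $\Scal_{g(t)}\ge -1/t$ forces $\frac{d}{dt}\mu_{g(t)}(B_2)\ge -\mu_{g(t)}(B_2)/t -(\text{boundary term})$, and control the boundary term by the completeness of $g(t)$ (the flow has bounded geometry on compact time subintervals away from $0$ only — so one really does need the global completeness here, exactly as the preamble warns), giving $\mu_{g(t)}(B_2)\le C$ for all small $t$; (iii) feed this into a mean-value / $\e$-regularity inequality for the equation $\pt u = -\Scal = e^{-2u}\lap_{g_0}u$ (a logarithmic fast-diffusion equation) on $B_2$ to upgrade the $L^1$ bound $\int_{B_2}e^{2u}\le C$ to a pointwise bound $\sup_{B_1}e^{2u(\cdot,t)}\le C$, which trivially gives $\mu_{g(t)}(B_1)\le C$ and hence the $\limsup$ bound. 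Here I would cite the relevant local estimates for this equation from \cite{TY3} rather than reproving them. Thus the logical skeleton is: scalar-curvature lower bound $\Rightarrow$ backward-in-time area control on $B_2$ using completeness $\Rightarrow$ local smoothing/$\e$-regularity $\Rightarrow$ $L^\infty$ control of the conformal factor on $B_1$ $\Rightarrow$ finite-volume claim.
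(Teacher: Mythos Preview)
Your proposal has a genuine gap at the crucial step (ii), and the sign issue you yourself flag earlier never gets resolved. For a fixed domain $B_2$ one has exactly
\[
\frac{d}{dt}\,\mu_{g(t)}(B_2)\;=\;-\int_{B_2}\Scal_{g(t)}\,d\mu_{g(t)},
\]
with no boundary term. The lower bound $\Scal_{g(t)}\ge -1/t$ then yields
\[
\frac{d}{dt}\,\mu_{g(t)}(B_2)\;\le\;\frac{1}{t}\,\mu_{g(t)}(B_2),
\]
which upon integrating \emph{backward} from $t_0$ to $t$ gives only $\mu_{g(t)}(B_2)\ge \frac{t}{t_0}\,\mu_{g(t_0)}(B_2)$, a \emph{lower} bound. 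To bound $\mu_{g(t)}(B_2)$ from above as $t\downto 0$ you would need an \emph{upper} curvature bound, or equivalently (via Gauss--Bonnet) a uniform lower bound on $\int_{\partial B_2}\kappa_{g(t)}\,ds_{g(t)}$. You assert that ``completeness'' supplies this, but completeness of $g(t)$ carries no information about the geodesic curvature of a fixed Euclidean circle in the unknown metric $g(t)$; nothing in your outline produces such a bound. Thus step (ii) fails, and without it step (iii) has no input.

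The paper's proof avoids this direction problem altogether. It argues by contradiction: assuming $\mu_{g(t_n)}(D)\to\infty$ along some $t_n\downto 0$, it restricts $\mu_{g(t_n)}$ to $D$, views it as a measure on $S^2$, and starts a \emph{new} complete Ricci flow $G_n$ on $S^2$ from that measure. The maximally-stretched comparison principle (Theorem~\ref{ordered_thm}) gives $G_n(t)\le g(t_n+t)$, and the volume lemma on $S^2$ (Lemma~\ref{lem_lower_vol_sphere}) then forces $\mu_{g(t_n)}(D)\le \mu_{g(1+t_n)}(B_{\sqrt 2})+16\pi$, whose right-hand side stays bounded as $n\to\infty$. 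The whole point is that the comparison with a complete auxiliary flow on $S^2$ (where total area evolves linearly) replaces the missing local \emph{upper} curvature or boundary-curvature control that your approach would require.
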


This claim will follow if we can show that extremely large volume for very small time $t>0$ will imply very large volume for later times. 
This type of control was proved in \cite{TY3} when working on $\R^2$; we adjust the argument to work on $S^2\simeq \C\union\{\infty\}$.
Recall that $B_r$ is the Euclidean ball of radius $r$ centred at the origin.

\begin{lem}[cf. {\cite[Lemma 3.1]{TY3}}]
\label{lem_lower_vol_sphere}
Let $\mu$ be a nonatomic Radon measure on $S^2\simeq \C\union\{\infty\}$ and $g(t)$ for $t \in (0,T)$ be the complete Ricci flow on the sphere starting from $\mu$ as given by Theorem~\ref{exist_uniq_thm}.  Then for $0<r<R<\infty$ and 
$0< t < \frac{\mu(B_r)}{8\pi}$, 
we have
\begin{equation*}
\mu(B_r) \leq  \mu_{g(t)}(B_R) + \frac{8\pi t}{1 - (\frac{r}{R})^2}.
\end{equation*}
\end{lem}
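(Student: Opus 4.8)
The plan is to adapt the argument of \cite[Lemma 3.1]{TY3}, which establishes the analogous statement on $\R^2$ with $4\pi$ in place of $8\pi$, to the compact sphere; the doubling of the constant will reflect the fact recorded in Theorem \ref{exist_uniq_thm} that on $S^2$ the total volume decays at the constant rate $8\pi$ rather than $4\pi$. The underlying mechanism is that a large amount of volume concentrated in $B_r$ at time zero cannot leak out of the larger ball $B_R$ too quickly, because the flow is bounded below by the (simpler) flow generated only by the part of $\mu$ supported in $B_r$, whose escape can be controlled by an explicit hyperbolic barrier.

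We may assume $\mu(B_r)>0$, as the inequality is trivial otherwise. I would then let $h(t)$, $t\in\bigl(0,\frac{\mu(B_r)}{8\pi}\bigr)$, be the complete conformal Ricci flow on $S^2$ with initial data $\mu\llcorner B_r$ supplied by Theorem \ref{exist_uniq_thm}; its maximal existence time is precisely the upper endpoint of the range of $t$ in the statement, and the final assertion of Theorem \ref{exist_uniq_thm} gives $\mu_{h(t)}(S^2)=\mu(B_r)-8\pi t$. Since $\mu\llcorner B_r\leq\mu$, the comparison principle for these flows (monotonicity in the initial measure) yields $g(t)\geq h(t)$ on $S^2$, hence
\[
\mu_{g(t)}(B_R)\ \geq\ \mu_{h(t)}(B_R)\ =\ \mu(B_r)-8\pi t-\mu_{h(t)}(S^2\setminus B_R),
\]
and the task reduces to proving $\mu_{h(t)}(S^2\setminus B_R)\leq\frac{8\pi t(r/R)^2}{1-(r/R)^2}$.

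For this I would restrict $h(t)$ to $\Omega:=S^2\setminus\overline{B_r}=\{|z|>r\}\cup\{\infty\}$, which is conformally the unit disc via $w=r/z$ (sending $\infty$ to $0$ and $\partial B_r$ to $\partial D$), and on which $h(t)$ attains the \emph{zero} measure as initial data, since $\mu\llcorner B_r$ is supported in $\overline{B_r}$. On the disc, the complete conformal Ricci flow with zero initial data given by Theorem \ref{exist_uniq_thm} is the hyperbolic metric $\frac{8t}{(1-|w|^2)^2}|dw|^2$ of Gauss curvature $-\frac{1}{2t}$; invoking its maximality among conformal Ricci flows on the disc that attain the zero measure gives $h(t)|_\Omega\leq\frac{8t}{(1-|w|^2)^2}|dw|^2$ in the coordinate $w$. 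As $S^2\setminus B_R$ corresponds to $\{|w|\leq r/R\}$ under $w=r/z$, we obtain
\[
\mu_{h(t)}(S^2\setminus B_R)\ \leq\ \int_{\{|w|<r/R\}}\frac{8t}{(1-|w|^2)^2}\,dA\ =\ \frac{8\pi t(r/R)^2}{1-(r/R)^2},
\]
and combining with the previous display, together with $8\pi t+\frac{8\pi t(r/R)^2}{1-(r/R)^2}=\frac{8\pi t}{1-(r/R)^2}$, completes the proof.

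The step I expect to be the main obstacle is justifying the two comparison principles being invoked: the monotonicity $g(t)\geq h(t)$ in the initial measure, and the maximality of the complete conformal Ricci flow on a disc that attains the zero measure, which must dominate the incomplete flow $h(t)|_\Omega$ obtained by restriction. For smooth initial metrics these are the maximality theorems of Giesen and Topping; in the present setting, with (here, zero) measure-valued initial data, they should be extracted from the well-posedness theory of \cite{TY3, TY4}, using that $h(t)|_\Omega$ genuinely attains the zero measure in the weak sense --- which is immediate from $\mu_{h(t)}\weakto\mu\llcorner B_r$ together with $\support(\mu\llcorner B_r)\subseteq\overline{B_r}$. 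The remaining ingredients, namely the constant-rate volume decay on $S^2$ and the explicit integral of the hyperbolic area density, are elementary.
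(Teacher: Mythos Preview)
Your proposal is correct and follows essentially the same approach as the paper: both introduce the auxiliary Ricci flow on $S^2$ starting from $\mu\llcorner B_r$, compare it below $g(t)$ via the monotonicity principle (Theorem~\ref{ordered_thm}), and bound its mass outside $B_R$ by the scaled complete hyperbolic metric on $S^2\setminus B_r$, again via Theorem~\ref{ordered_thm}. The only cosmetic difference is that the paper names this barrier abstractly as $2tH_r$ and quotes $\mu_{H_r}(S^2\setminus B_R)=\frac{4\pi r^2}{R^2-r^2}$, whereas you write it explicitly in the disc coordinate $w=r/z$ and compute the same integral directly.
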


We will repeatedly need the following theorem of the second author and Yin \cite{TY4} that establishes a maximally stretched property that substitutes for a maximum principle.

\begin{thm}[{\cite[Theorem 1.2]{TY4}}] %[Ordered initial data gives ordered Ricci flows. From \cite{TY4}]
\label{ordered_thm}
Let $M$, $\mu$ and $T$ be as in Theorem \ref{exist_uniq_thm}, and let 
$g(t)$ be the unique Ricci flow constructed in that theorem.
Suppose now that $\nu$ is any other Radon measure on $M$ with $\nu\leq \mu$,
and $\ti g(t)$, $t\in (0,\ti T)$, is any smooth conformal Ricci flow  on $M$
attaining $\nu$ as initial data. Then $\ti T\leq T$ and 
$$g(t)\geq \ti g(t)$$
for all $t\in (0,\ti T)$.
\end{thm}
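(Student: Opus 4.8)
I would begin by lifting everything to the universal cover, so that the underlying Riemann surface is, by uniformization, $D$ (the disc), $\C$, or $S^2$, and by working in a fixed global conformal gauge: write $g(t)=v(\cdot,t)\,g_0$ and $\ti g(t)=\ti v(\cdot,t)\,g_0$ for one fixed background metric $g_0$ (flat in the disc and plane cases, round on $S^2$) and positive functions $v,\ti v$. In this gauge the two-dimensional Ricci flow equation becomes the scalar logarithmic fast-diffusion equation $\pt v=\Delta_{g_0}(\log v)-c$, where $c\ge 0$ is twice the Gauss curvature of $g_0$ (so $c=0$ in the flat cases); ``attaining $\mu$ as initial data'' means $v(\cdot,t)\,g_0\weakto\mu$ as $t\downto 0$, and $\nu\le\mu$ is the corresponding ordering of the initial data. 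The conclusion $g(t)\ge\ti g(t)$ is then exactly $\ti v\le v$, i.e.\ a comparison principle asserting that the distinguished solution $v$ produced by Theorem~\ref{exist_uniq_thm} dominates every conformal Ricci flow issuing from smaller data.

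The plan is to prove this in three stages. First, for smooth strictly positive data on a fixed relatively compact coordinate domain, ordered comparison is classical: setting $w=v-\ti v$, the curvature terms cancel and $w$ solves the linear divergence-form parabolic equation $\pt w=\Delta_{g_0}(a\,w)$ with $a=\frac{\log v-\log\ti v}{v-\ti v}=\frac1\xi$ for some $\xi$ between $v$ and $\ti v$, which is nonnegative since $\log$ is increasing; the parabolic maximum principle then gives $w\le 0$ at later times once $w\le 0$ initially and on the parabolic boundary. Second --- this is the mechanism behind the ``maximally stretched'' property --- I would realise $v$ via an approximation/exhaustion of the type used in \cite{TY3}, as the increasing limit of \emph{complete} conformal Ricci flows $v_j$ on an exhaustion of the surface by coordinate discs $\Omega_j$ with mollified, truncated initial data $\mu_j\uparrow\mu$: because a complete conformal metric on a disc has conformal factor blowing up at the boundary, the boundary inequality $\ti v\le v_j$ near $\partial\Omega_j$ holds for free on time intervals bounded away from $0$, after which stage one propagates it inward; letting $j\to\infty$, and using $\mu_j\uparrow\mu\ge\nu$ together with the weak attainment of initial data to control the time slice near $t=0$, yields $\ti v\le v$ on $(0,\ti T)$. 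Third, I would deduce $\ti T\le T$: if instead $\ti T>T$, then in the $\C$ and $S^2$ cases Theorem~\ref{exist_uniq_thm} gives $\mu_{g(t)}(M)=(1-\tfrac{t}{T})\mu(M)\to 0$ as $t\upto T$, so $\ti g(t)\le g(t)$ forces $\mu_{\ti g(t)}(M)\to 0$, contradicting that $\ti g$ is a genuine positive-volume metric at the interior time $t=T<\ti T$; in the disc case $T=\infty$ and there is nothing to prove.

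The main obstacle is that the fast-diffusion coefficient $a=1/\xi$ is genuinely degenerate --- unbounded where the conformal factor collapses, vanishing where it blows up --- so one cannot simply quote a textbook maximum principle, and, crucially, $\ti g(t)$ is \emph{any} conformal Ricci flow from $\nu$ rather than the canonical one, so uniqueness cannot be invoked to shortcut the comparison. Making stage one rigorous on noncompact pieces needs a priori lower bounds on the conformal factors away from $t=0$ (to keep $a$ bounded where it is used) and control of the behaviour at the boundary at infinity, which is where the global completeness of the canonical flow enters; and the monotone limit $j\to\infty$ together with the $t\downto 0$ passage with merely measure-valued data must be justified using the convergence and stability estimates for the canonical flow from \cite{TY3,TY4}. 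This technical core is exactly why the statement is quoted from \cite{TY4} rather than reproved here.
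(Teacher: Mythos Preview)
The paper does not contain a proof of this statement at all: Theorem~\ref{ordered_thm} is simply quoted from \cite[Theorem~1.2]{TY4} and then used as a black box throughout Section~2. There is therefore no ``paper's own proof'' against which to compare your proposal. You seem already to be aware of this, since your final sentence correctly observes that the technical core is exactly why the result is cited rather than reproved.

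As a standalone sketch of how such a comparison principle might be established, your three-stage outline (local comparison for smooth data via a linearised maximum principle, approximation of the canonical flow by complete flows on an exhaustion with blowing-up boundary values, and the volume argument for $\ti T\le T$) is a reasonable caricature of the strategy one would expect in \cite{TY4}. But since the present paper makes no attempt to justify the theorem, the appropriate response here is simply to invoke the citation, not to supply a proof.
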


\begin{proof}[Proof of Lemma \ref{lem_lower_vol_sphere}]
We may as well assume that $\mu(B_r)>0$ since otherwise the lemma is vacuous.
Since the restriction 
$\mu\llcorner B_r$
of $\mu$ to $B_r$ is also a nontrivial nonatomic Radon measure on the sphere, we can start the Ricci flow $G(t)$ on $S^2$ from this measure, for $t \in (0,\frac{\mu(B_r)}{8\pi})$, using Theorem \ref{exist_uniq_thm}. During this time we have $$\mu_{G(t)}(S^2) = \mu(B_r)-8\pi t.$$ By Theorem~\ref{ordered_thm}, 
applied  on $S^2$,
we have that $G(t) \leq g(t)$.
Writing $H_{r}$ for the unique complete conformal hyperbolic metric on $S^2 \setminus B_{r}$, we can also apply Theorem~\ref{ordered_thm} on $S^2 \setminus B_{r}$
to give $G(t) \leq 2t H_{r}$, since both flows start at the trivial measure and $2tH_r$ is complete.
%
%,  where $H_{r}$ denotes the complete conformal hyperbolic metric on $S^2 \setminus B_{r}$. 
Together these give the inequality
\begin{align*}
    \mu_{g(t)}(B_R) & \geq \mu_{G(t)}(B_R) = \mu_{G(t)}(S^2) - \mu_{G(t)}(S^2 \setminus B_R)\\ &\geq \mu(B_r) - 8\pi t - 2 t \mu_{H_r}(S^2 \setminus B_R).
\end{align*}
The result follows from the direct calculation
\begin{equation*}
    \mu_{H_r}(S^2 \setminus B_R) = \frac{4\pi r^2}{R^2-r^2}. \qedhere
\end{equation*}
\end{proof}

\begin{proof}[Proof of Claim \ref{finite_vol_claim}]
If not, then there exists a decreasing and null sequence $t_n$ such that
\begin{equation*}
    \lim_{n \rightarrow \infty} \mu_{g(t_n)}(D) = \infty.
\end{equation*}
Consider $n$ large enough so that $\mu_{g(t_n)}(D) > 8 \pi$.
By appealing to Theorem \ref{exist_uniq_thm}, we can extend $g(t)$, originally defined for $t\in (0,\ep)$, to at least the time interval $(0,t_n+1)$, for each $n$.
Let $\mu_n$ denote the restriction of the measure $\mu_{g(t_n)}$ to $D$, viewed as a measure on $S^2\simeq\C\union\{\infty\}$.
Let $G_n(t)$ be the Ricci flow on $S^2$ for $t \in (0,1]$ starting from $\mu_n$.  By Theorem~\ref{ordered_thm},
applied on $M$,
we have that $G_n(t) \leq g(t_n+t)$ for $t \in (0,1]$, and so by Lemma~\ref{lem_lower_vol_sphere} applied to $G_n(t)$, we have
\begin{align*}
\mu_{g(t_n)}(D) = \mu_n(D) &\leq  \mu_{G_n(1)}(B_{\sqrt{2}}) + 16 \pi\\
& \leq \mu_{g(1+t_n)}(B_{\sqrt{2}}) + 16 \pi.
\end{align*}
This yields a contradiction, as taking $n \rightarrow \infty$ makes the left-hand side diverge to infinity, whereas the 
right-hand side converges to $\mu_{g(1)}(B_{\sqrt{2}}) + 16 \pi < \infty$.
\emph{This completes the proof of Claim \ref{finite_vol_claim}.}
\end{proof}

By Claim \ref{finite_vol_claim}, 
combined with weak compactness for measures
\cite[Theorem 1.41]{evans_gariepy},
every sequence $t_n\downto 0$ has a subsequence so that 
$$\mu_{g(t_n)}\weakto \mu$$
for some Radon measure $\mu$ on $D$, as $n\to\infty$. We would like to establish that the sequence $t_n$ is not significant in that  $\mu_{g(t)}\weakto \mu$ as $t\downto 0$,
i.e. 
\beq
\label{psi_lim}
\int_D\psi d\mu_{g(t)}\to \int_D\psi d\mu\quad\text{ as }t\downto 0
\eeq
for all $\psi\in C_c^0(D)$.

\begin{claim}
\label{cts_limit_claim_new}
The limit \eqref{psi_lim} holds for all $\psi\in C_c^\infty(D)$.
\end{claim}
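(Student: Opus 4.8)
The plan is to upgrade the subsequential convergence already obtained (via Claim \ref{finite_vol_claim} and weak compactness) to genuine convergence as $t\downto 0$, by exploiting that the integrals $t\mapsto \int_D\psi\,d\mu_{g(t)}$ are controlled well enough to rule out two different subsequential limits. The natural mechanism is that $\pt \mu_{g(t)} = -\Scal_{g(t)}\,\mu_{g(t)}$, i.e. the time derivative of the volume measure is governed by the scalar curvature, and on a surface this is a quantity for which \cite{TY3} provides one-sided estimates. So the key point will be to show $\frac{d}{dt}\int_D\psi\,d\mu_{g(t)}$ is integrable near $t=0$ — or at least that $\int_D\psi\,d\mu_{g(t)}$ is (say) of bounded variation, or monotone-plus-bounded, in $t$ near $0$ — which forces the existence of a limit independent of any sequence.

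Concretely, first I would fix $\psi\in C_c^\infty(D)$ and compute, using the Ricci flow equation on a surface ($\pt g = -2K g$, equivalently $\pt \mu_{g(t)} = -\Scal_{g(t)}\mu_{g(t)} = -2K_{g(t)}\mu_{g(t)}$),
\[
\frac{d}{dt}\int_D \psi\, d\mu_{g(t)} = -\int_D \psi\, \Scal_{g(t)}\, d\mu_{g(t)}.
\]
Writing $g(t)=e^{2u(t)}g_{\mathrm{eucl}}$, one has $\Scal_{g(t)}\,d\mu_{g(t)} = -2\Delta u\, dx$ where $\Delta$ is the flat Laplacian, so the right-hand side becomes $2\int_D \psi\,\Delta u\,dx = 2\int_D (\Delta\psi)\, u\, dx$. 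Thus
\[
\frac{d}{dt}\int_D \psi\, d\mu_{g(t)} = 2\int_D (\Delta\psi)\, u(t)\, dx,
\]
and the task reduces to controlling $\int_D |u(t)|\,dx$ (or an integral against the fixed bounded function $\Delta\psi$) uniformly as $t\downto 0$. The upper volume bound from Claim \ref{finite_vol_claim} gives $\int_D e^{2u(t)}\,dx$ bounded, which via Jensen controls $\int_D u(t)^+\,dx$ from above; the harder direction is a lower bound, i.e. an upper bound on $\int_D u(t)^-\,dx$, preventing the conformal factor from degenerating too badly on a large set. Here I would invoke the lower volume estimates / maximally stretched property — Theorem \ref{ordered_thm} and Lemma \ref{lem_lower_vol_sphere}, possibly localised to slightly smaller balls $B_r\subset D$ — together with the fact that $g(t)$ extends past $t=0$-neighbourhoods to a fixed later time, to obtain that $\mu_{g(t)}(B_r)$ is bounded below uniformly for small $t$ on any $B_r$ that already carries positive mass, and more quantitatively that $u(t)$ cannot be too negative on too large a portion of $D$. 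Combining these, $\int_D |u(t)|\,dx \leq C$ uniformly for $t\in(0,t_0)$, whence $\left|\frac{d}{dt}\int_D\psi\,d\mu_{g(t)}\right|\leq C\|\Delta\psi\|_{\infty}$, so the function $t\mapsto\int_D\psi\,d\mu_{g(t)}$ is Lipschitz on $(0,t_0)$, hence extends continuously to $t=0$; its value there must agree with $\int_D\psi\,d\mu$ for \emph{any} subsequential weak limit $\mu$, giving \eqref{psi_lim} for $\psi\in C_c^\infty(D)$.

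I expect the main obstacle to be precisely the uniform lower bound on the conformal factor, equivalently the upper bound on $\int_D u(t)^-\,dx$: an $L^\infty$ lower bound on $u(t)$ is false in general (the flow can genuinely be forming a cusp or a cone tip, where $u\to-\infty$ pointwise), so one must settle for an $L^1$-type bound and be careful that the negative part is not concentrated in a way that defeats the estimate. This is where the maximally stretched comparison (Theorem \ref{ordered_thm}) does the real work: comparing $g(t)$ from below with the hyperbolic metric on a punctured domain, as in the proof of Lemma \ref{lem_lower_vol_sphere}, pins down the worst-case degeneration and shows $e^{2u(t)}$ stays above a fixed integrable barrier. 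A secondary technical point is that $u(t)$ and its derivatives are only known to be nice on compact subsets of $D$ for $t>0$, so the differentiation under the integral sign and the integration by parts need the cutoff provided by $\psi\in C_c^\infty(D)$ and a justification that the boundary terms vanish; this is routine given smoothness of the flow for $t>0$ but should be stated. Once Claim \ref{cts_limit_claim_new} is in hand, extending \eqref{psi_lim} from $C_c^\infty(D)$ to $C_c^0(D)$ is a standard density argument using the uniform bound $\sup_{t\in(0,t_0)}\mu_{g(t)}(D)<\infty$.
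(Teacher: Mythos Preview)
Your overall strategy matches the paper's: differentiate $\int_D\psi\,d\mu_{g(t)}$, integrate by parts, and reduce to an integral control on the conformal factor. But two points in your execution are off.

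First, the uniform bound $\int_D |u(t)|\,dx\leq C$ (in your convention $g(t)=e^{2u(t)}|dz|^2$) is too strong and is in fact false. If, say, $\mu$ vanishes on $D$ then $e^{2u(t)}\to 0$ uniformly on compact subsets as $t\downto 0$, so $u(t)\to -\infty$ on all of $D$ and $\int_D u(t)^-\,dx\to\infty$. What the paper obtains is only $\int_D |u(t)|\,dx \leq C + c|\log t|$, which diverges logarithmically but is still \emph{integrable in $t$} over $(0,\de)$. That is enough: integrability of $t\mapsto \frac{d}{dt}\int\psi\,d\mu_{g(t)}$ (not Lipschitz continuity) is what forces the limit along the subsequence $t_n\downto 0$ to be the full limit.

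Second, your proposed mechanism for the lower bound --- comparison via the maximally stretched property (Theorem~\ref{ordered_thm}) and a ``fixed integrable barrier'' --- does not apply here. At this stage of the proof $g(t)$ is an \emph{arbitrary} complete Ricci flow; we do not yet know it coincides with the maximally stretched flow of Theorem~\ref{exist_uniq_thm}, precisely because we have not yet established the full weak limit $\mu_{g(t)}\weakto\mu$. The paper's actual device is different and more elementary: the function $t\mapsto u_{\mathrm{paper}}(\cdot,t)/t$ is monotonically decreasing (equivalently $K_{g(t)}\geq -\frac{1}{2t}$, see \cite[Remark~2.5]{TY4}), so evaluating at the fixed later time $\ep/2$ on the compact set $\overline D\subset M$ gives $u_{\mathrm{paper}}(t)\geq \eta t$ for some $\eta>0$. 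In your convention this is $e^{2u(t)}\geq \eta t$, i.e.\ $u(t)\geq \tfrac12\log(\eta t)$, yielding the logarithmic-in-$t$ bound above. No fixed-in-$t$ barrier exists in general.
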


If Claim \ref{cts_limit_claim_new} holds then for all $\psi\in C_c^0(D)$ we can consider the mollification $\psi_h$ and estimate
\beqa
\abs{\int \psi d\mu_{g(t)} - \int \psi d\mu } &\leq 
\abs{\int \psi_h d\mu_{g(t)} - \int \psi_h d\mu } +\|\psi-\psi_h\|_{C^0}\big(\mu_{g(t)}(D)+\mu(D)\big).\\
\eeqa
Invoking Claims \ref{cts_limit_claim_new} and \ref{finite_vol_claim} gives
$$\limsup_{t\downto 0}\abs{\int \psi d\mu_{g(t)} - \int \psi d\mu }
\leq 0+C\|\psi-\psi_h\|_{C^0},$$
for $C$ independent of $h$ and $\psi$. 
Sending $h\downto 0$ then gives \eqref{psi_lim} for all $\psi\in C_c^0(D)$ as desired.

\begin{proof}[Proof of Claim~\ref{cts_limit_claim_new}]
We already know that 
$$\int_D\psi d\mu_{g(t_n)}\to \int_D\psi d\mu\quad\text{ as }n\to\infty$$
for our null sequence $t_n$, so it suffices to establish the integrability of the function
$$t\mapsto \frac{d}{dt}\int \psi d\mu_{g(t)}$$
over some small time interval $(0,\de)$,
because in that case we could estimate
$$\sup_{t\in (0,t_n)}\bigg|\int_D\psi d\mu_{g(t)}
-\int_D\psi d\mu\bigg|
\leq
\bigg|\int_D\psi d\mu_{g(t_n)}-\int_D\psi d\mu\bigg|
+\int_0^{t_n} \bigg|\frac{d}{ds}\int \psi d\mu_{g(s)}
\bigg|ds$$
and then send $n\to\infty$.
Writing $u(t)$ for the conformal factor of $g(t)$, with respect to the standard flat metric on $D$, we know that 
the function  $t\mapsto \frac{u(\cdot,t)}{t}$ is 
monotonically decreasing
(see, e.g. \cite[Remark 2.5]{TY4})
and so there exists some $\eta > 0$ such that $u(t) \geq \eta t$ on $D \times (0,\frac{\ep}{2}]$. 
The significance of $\frac{\ep}{2}$ is that it lies within the existence time interval $(0,\ep)$.
Using the same argument as in \cite[Lemma 4.5]{TY3}, we compute for our $\psi\in C_c^\infty(D)$ that
\begin{align*}
 \abs{ \frac{d}{dt} \int \psi d\mu_{g(t)}} = 
 \abs{ \frac{d}{dt} \int \psi u dx} =
 \abs{ \int \psi \lap\log u \,dx} &= \abs{ \int \Delta \psi \log u\, dx}\\
 & \leq \norm{\Delta \psi}_{C^0} \int_{D} |\log u(t)| dx. 
\end{align*}
We can then estimate for $\eta t\in (0,1]$ that
\begin{align*}
\int_{D} |\log u(t)| dx &=
\int_{D\cap \{u\geq 1\}} \log u(t) dx + \int_{D\cap \{u< 1\}} (-\log u(t)) dx\\
& \leq \int_{D} u(t)\, dx + \pi (-\log \eta t)\\
& \leq C + \pi (-\log \eta t)
\end{align*}
by Claim~\ref{finite_vol_claim},
which is integrable.
\emph{This completes the proof of Claim \ref{cts_limit_claim_new}.}
\end{proof}

We have shown that there exists a Radon measure $\mu$ on $D$ such that $\mu_{g(t)}\weakto \mu$ as $t\downto 0$.
It remains to show that $\mu$  is nonatomic, which will also require Lemma~\ref{lem_lower_vol_sphere}.
%
%\begin{proof}[Proof of Theorem~\ref{time_zero_limit_thm}]
Suppose instead that $\mu$ has a point of positive mass. After modifying by an automorphism of the disc $D$, we may assume that
$\mu(\{0\}) = \delta > 0$. 
Choose $t_0 := \min\{ \frac{\delta}{64 \pi} , \frac{\ep}{2} \}$. We shall show that the volume measure at this positive time $t_0$ also has a point of positive mass, which will be a contradiction. 
For any $r\in (0,\half)$ we have
\begin{equation*}
    \liminf_{s \downto 0} \mu_{g(s)}(B_r) \geq 
    \mu(B_r)
    \geq\delta,
\end{equation*}
and hence for $s_0\in (0,t_0)$ sufficiently small  $$\mu_{g(s_0)}(B_r) \geq \frac{\delta}{2}.$$ If $\mu_r$ denotes the restriction of $\mu_{g(s_0)}$ to $B_r$, 
seen as a measure on $S^2\simeq\C\union\{\infty\}$,
then by Theorem~\ref{exist_uniq_thm}, there exists a Ricci flow $G_r(t)$ for $t \in (s_0,s_0+\frac{\delta}{16\pi})$ on $S^2$ starting weakly from $\mu_r$ at time $s_0$. Moreover, by Theorem~\ref{ordered_thm}, we have that $G_r(t_0) \leq g(t_0)$
on $M$.
Applying Lemma~\ref{lem_lower_vol_sphere} with $R = \sqrt{2} r$ gives
the second inequality in
\begin{equation*}
 \mu_{g(t_0)}(B_R) \geq \mu_{G_r(t_0)}(B_R) \geq \mu_{g(s_0)}(B_r) - 16 \pi t_0 \geq \frac{\delta}{4}, 
\end{equation*}
and therefore
\begin{equation*}
    \mu_{g(t_0)}(\{0\}) = \lim_{R \downto 0} \mu_{g(t_0)}(B_R) \geq \frac{\delta}{4}, %. \qedhere
\end{equation*}
which gives a contradiction and completes the proof that $\mu$ is nonatomic.
\emph{This completes the proof of Theorem~\ref{time_zero_limit_thm}.}
%\end{proof}

\section{Expanders versus measures}

In this section we prove Theorem~\ref{exp_meas_corresp}.

Suppose 
$(M,\mu,X)$ is a measure expander as in Definition \ref{meas_exp_def}.
%
%$M$ is a surface equipped with a conformal structure, $X$ is a complete 
%conformal vector field on $M$, and $\mu$ is a nontrivial nonatomic  Radon measure on $M$ that is expanding under the action of $X$. 
Let $g(\tau)$ for $\tau \in (0,T)$ be the unique complete conformal Ricci flow on $M$ starting weakly from $\mu$, given by Theorem~\ref{exist_uniq_thm}.

Fixing a scaling parameter $t > 0$, we first note that the rescaled Ricci flow $\tau\mapsto t^{-1} g(t \tau)$ for $\tau \in (0,t^{-1} T)$ starts weakly from the rescaled measure $t^{-1} \mu$. 

Alternatively, let $\phi_t:M \rightarrow M$ be again the flow of the vector field $-\frac{X}{t}$ for all $t>0$, with $\phi_1=id_M$,
and let $\psi_s:M \rightarrow M$ be again the flow of the vector field $X$ for all $s\in\R$, with $\psi_0=id_M$, so
$$\psi_s=\phi_{e^{-s}}.$$
The fact that $\mu$ is expanding with respect to $X$, i.e. \eqref{expanding_wrt_X},
can be equivalently written as 
$\phi_t^*(\mu)
=\psi_{-\log t}^*(\mu)
=t^{-1}\mu$ for all $t>0$
as in \eqref{expanding_wrt_X2}.
Thus, the Ricci flow $\tau\mapsto\phi_{t}^*(g(\tau))$ for $\tau \in (0,T)$ also starts weakly from $\phi_{t}^*(\mu) = t^{-1} \mu$, and therefore, by Theorem~\ref{exist_uniq_thm}, the two flows starting with $t^{-1} \mu$
must agree:
$$t^{-1} g(t \tau)=\phi_{t}^*(g(\tau)).$$
In particular, by setting $\tau=1$, we can deduce that not only is our flow immortal, but it must also satisfy the relation
\begin{equation*}
    g(t) = t \phi_t^*(g(1)), \quad \forall t > 0,
\end{equation*}
so $(M,g(1),X)$ is an expanding Ricci soliton and $g(t)$ is an expanding Ricci soliton flow. Moreover, the soliton is nontrivial because otherwise $\mu$ would have to be trivial.

Conversely, suppose we start with a nontrivial expanding Ricci soliton $(M,g,X)$ with $g(t) = t \phi_t^*(g)$ the associated Ricci soliton flow. By Theorem \ref{time_zero_limit_thm}, there exists a nonatomic 
%\st{nontrivial} 
Radon measure $\mu$ such that $g(t)$ starts weakly from $\mu$. As above, we note that for any $s \in \mathbb{R}$
\begin{equation*}
   \psi_s^*(g(t)) 
   = \psi_s^*(t\psi_{-\log t}^*(g))
   =t\psi_{s-\log t}^*(g)
   = t \phi_{t e^{-s}}^*(g) = e^s g(t e^{-s}), \quad \forall t > 0,
\end{equation*}
and hence taking $t \downto 0$, we have that $\psi_s^*(\mu) = e^s \mu$, for any $s \in \mathbb{R}$, that is,  $\lie_X \mu = \mu$, which means that $\mu$ is expanding with respect to $X$.
We see that $\mu$ is nontrivial because, for example, Theorem \ref{time_zero_limit_thm} tells us that this is only possible when $(M,g,X)$ is trivial.

\section{Classification of measure expanders}
\label{class_meas_exp_sect}

In this section we prove the classification of measure expanders claimed in Theorem \ref{meas_exp_class}. In light of Theorem \ref{exp_meas_corresp}, 
and the discussion in the introduction of trivial solitons, this will
complete the classification of expanding Ricci solitons in two dimensions.

\subsection{Simply connected measure expanders}
\label{simp_conn_sect}

We begin with the case that $M$ is a simply connected smooth surface equipped with a conformal structure, which can only be 
$\C$, $D$ or $S^2$. The final case $S^2$ can instantly be discounted: 
$S^2$ cannot admit a measure expander because it would have to have finite total measure and so we could compute
$$\mu(S^2)=[\psi_s^*\mu](S^2)=e^s\mu(S^2),$$
for all $s\in \R$ by \eqref{expanding_wrt_X2}, which is a contradiction because $\mu$ is nontrivial.
Alternatively, any measure expander on $S^2$ would induce 
an expanding soliton, but all expanding Ricci solitons on closed surfaces have constant curvature $-\half$ as mentioned in the introduction.
This leaves us with the cases that $M=\C$ or $M$ is the disc,  equivalently a half-space.

The next ingredient for a simply connected measure expander is a nontrivial complete conformal vector field $X$. Such vector fields are highly constrained.
They integrate to give conformal automorphisms $\psi_s$.
On $\C$, such automorphisms are all of the form 
$z\mapsto  a z+ b$. 
Thus either $X$ has no zeros and we can pick a global complex coordinate $z=x+iy$ on $\C$ so that $X=\pl{}{x}$,
or $X$ has one zero and by choosing coordinates that make this the origin we have 
%
%, which gives two possibilities. 
%The first possibility is that $X$ is a constant vector field, which after modification by a biholomorphism $z\mapsto a z$ can be considered to be $X=\pl{}{x}$.
%The second possibility is that $X$ is an infinitesimal dilation/rotation, which after modification by a biholomorphism $z\mapsto z+b$ could be considered to be 
%a  vector field 
$X=\rho r\pl{}{r} + \si \pl{}{\th}$, for real $\rho,\si \in\R$, i.e. $X$ is an infinitesimal dilation/rotation.
%
%
%
%
%$X=\frac{r}{\al}\pl{}{r} + \si \pl{}{\th}$, for real $\al,\si \in\R$. 
%
To be part of a measure expander, 
%this $X$ must point outside the unit disc somewhere on the unit circle, so 
we must have $\rho>0$, and 
we can then define $\al=\frac{1}{\rho}>0$.
This is because otherwise we would have
$\psi_s(D)\subset D$ for all $s\geq 0$, 
implying that $\psi_s^*\mu(D)\leq \mu(D)$ for $s\geq 0$, whereas for $\mu$ to be expanding with respect to $X$ we need that $\psi_s^*\mu(D)=e^s\mu(D)$.
Moreover, after modification by a reflection, we may assume that $\si \geq 0$.

Similarly, on the upper half-plane $\mathbb{H}$, after modification by an automorphism we can reduce to the cases that $X=\pl{}{x}$ 
%(after dilating) 
or 
$X=\frac{r}{\al}\pl{}{r}$ for $\al>0$.
%(after horizontal translation).

%\cmt{Also need to consider measure to get the reduction above...}

The final ingredient for a simply connected measure expander is a nontrivial nonatomic Radon measure expanding under the action of $X$.

\begin{lem}\label{lem_split}
Let $N$ be either $S^1$ or a connected open subset of $\R$, and let $\mu$ be a nontrivial nonatomic Radon measure on $\mathbb{R} \times N$. Fix $\al > 0$. Then $\mu$ is expanding under the action of the translating vector field $X = \frac{1}{\al} \frac{\partial}{\partial x}$ in the sense of \eqref{expanding_wrt_X} if and only if $\mu$ is a product measure
\begin{equation}\label{eqn prod}
    \mu = e^{\al x} dx \otimes \nu,
\end{equation}
where $dx$ denotes the Lebesgue measure on $\mathbb{R}$, and $\nu$ is a nontrivial (not necessarily nonatomic) Radon measure on $N$. 
\end{lem}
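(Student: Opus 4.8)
The plan is to prove the easy direction first and then reduce the hard direction to a one-dimensional statement about the translation action on $\R$. For the easy direction, suppose $\mu = e^{\al x}dx\otimes\nu$. The flow of $X = \frac1\al\pl{}{x}$ is $\psi_s(x,\xi) = (x+\tfrac s\al, \xi)$, so $\psi_s^*(\mu) = e^{\al(x+s/\al)}dx\otimes\nu = e^s\mu$, which is exactly \eqref{expanding_wrt_X2}; and $\mu$ is manifestly a nontrivial nonatomic Radon measure since $e^{\al x}dx$ is nonatomic and $\nu$ is nontrivial. (One should also check nonatomicity forces nothing on $\nu$, and that $\nu$ nontrivial is both necessary and sufficient for $\mu$ nontrivial — this is immediate from Fubini.)

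For the converse, assume $\psi_s^*(\mu)=e^s\mu$ for all $s$. The key idea is to disintegrate. First I would argue that $\mu$ has no mass on any ``vertical'' slice $\{x_0\}\times N$: indeed each such slice has measure zero because $\mu$ is nonatomic along the $x$-direction — more carefully, if some slice had positive mass $c$, then by the scaling relation every slice $\{x\}\times N$ would have mass $e^{\al(x-x_0)}c>0$, and integrating over any interval of $x$-values against a nonatomic transverse structure contradicts... actually the cleanest route is: consider the pushforward $\pi_*\mu$ under projection $\pi:\R\times N\to\R$; the scaling relation forces $\pi_*\mu$ to satisfy $(\tau_{s/\al})_*(\pi_*\mu)=e^s(\pi_*\mu)$ where $\tau_a$ is translation by $a$ on $\R$, and the only Radon measures on $\R$ with this exponential-translation-invariance are the multiples of $e^{\al x}dx$ (this is the genuinely one-dimensional lemma — prove it by noting $\pi_*\mu$ restricted to any bounded interval is finite, writing $f(x) = (\pi_*\mu)((-\infty,x])$ or working with the distribution function, and deducing a functional equation $F(x+a) - F(x) = e^{\al a}(F(x)-F(-\infty))$ type identity that, together with monotonicity/local finiteness, forces absolute continuity with density $ce^{\al x}$; in particular $c>0$ since $\mu$ is nontrivial). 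In particular $\pi_*\mu$ is nonatomic, so by the disintegration theorem for Radon measures there is a (weakly measurable) family $\{\nu_x\}_{x\in\R}$ of probability measures on $N$ with $\mu = \int_\R \nu_x\, d(\pi_*\mu)(x) = \int_\R \nu_x\, c e^{\al x}dx$.

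It then remains to show the fibre measures $\nu_x$ are ($\pi_*\mu$-a.e.) independent of $x$. This is where the scaling relation gets used a second time: $\psi_s$ acts on the base as translation by $s/\al$ and trivially on the fibre, so the uniqueness part of the disintegration theorem, applied to both $\mu$ and $e^{-s}\psi_s^*\mu = \mu$, identifies $\nu_{x}$ with $\nu_{x + s/\al}$ for a.e. $x$, for each fixed $s$; a Fubini argument over $(x,s)$ upgrades this to: $\nu_x = \nu_{x'}$ for a.e. pair $(x,x')$, hence $\nu_x$ equals a fixed measure $\nu_0$ for a.e. $x$. Setting $\nu = c\,\nu_0 \in \mathcal R(N)$ gives $\mu = e^{\al x}dx\otimes\nu$, and $\nu$ is nontrivial because $\mu$ is. The main obstacle is the careful handling of the disintegration and its uniqueness/measurability — in particular justifying that $\psi_s$-equivariance transfers to the fibre measures for a.e. $x$ \emph{simultaneously} in $s$, for which one invokes joint measurability of $(s,x)\mapsto \nu_{x+s/\al}$ and Fubini; everything else (the easy direction, and the classification of exponentially-scaling measures on $\R$) is routine. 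An alternative to disintegration, perhaps cleaner for the write-up, is to test against product functions: show $x\mapsto \int_{\R\times N}\varphi(x)^{\text{shift}}\eta\,d\mu$ behaves correctly and conclude $\mu = e^{\al x}dx\otimes\nu$ by a density argument in $C_c(\R\times N)$, defining $\nu$ directly as the ``slice at $x=0$'' via $\nu(\eta) := \lim$ of suitable averages; but the disintegration route is the most transparent and I would present that.
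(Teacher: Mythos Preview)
Your easy direction is fine, but the main argument has a genuine gap: you push $\mu$ forward along the projection $\pi:\R\times N\to\R$ and then classify Radon measures on $\R$ with the exponential scaling property. The trouble is that $\pi_*\mu$ need not be Radon. If $N$ is unbounded (e.g.\ $N=\R$ or $N=(0,\infty)$, both of which occur in the applications) and the eventual fibre measure $\nu$ has infinite total mass, then $(\pi_*\mu)(I)=\infty$ for every open interval $I$, so neither your one-dimensional classification nor the disintegration theorem applies as you have stated them. You can repair this by first restricting $\mu$ to $\R\times K$ for compact $K\subset N$ (the scaling relation survives because $\psi_s$ preserves such strips), running your argument for each $K$, and then exhausting $N$; but this adds a layer of bookkeeping that your write-up would need to include.

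The paper avoids all of this with a one-line change of variable: set $\tilde\mu:=e^{-\al x}\mu$, which is again a Radon measure on $\R\times N$, and compute
\[
\psi_s^*(e^{\al x}\tilde\mu)=e^{\al x+s}\psi_s^*\tilde\mu,
\qquad\text{hence}\qquad
\lie_X\mu=\mu+e^{\al x}\lie_X\tilde\mu.
\]
Thus $\mu$ is expanding under $X$ if and only if $\tilde\mu$ is invariant under horizontal translations. One then quotes the classical fact that a translation-invariant Radon measure on $\R\times N$ is a product $dx\otimes\nu$ with $\nu(A)=\tilde\mu([0,1)\times A)$. No projection, no disintegration, and no one-dimensional classification are needed. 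Your ``alternative'' route of testing against product functions is in fact closer to how one proves this classical splitting, and would have sidestepped the projection issue; but the substitution $\mu\mapsto e^{-\al x}\mu$ is the cleanest way to organise the argument.
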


In practice, $N$ will be either $S^1$, $\R$, $(0,\infty)$ or $(0,\pi)$.

\begin{rmk}
\label{not_nec_conf_rmk}
It will be useful to keep in mind that Lemma~\ref{lem_split} does not mind with which conformal structure that $\R\times N$ is endowed, although in practice we will want the vector field to be conformal.
\end{rmk}

\begin{proof}[Proof of Lemma~\ref{lem_split}]
First note that the vector field $X$ integrates to give diffeomorphisms $\psi_s(x,y)=(x+\frac{s}{\al},y)$.
Given any nonatomic nontrivial Radon measure $\mu$ on  $\R \times N$, define a new nonatomic nontrivial Radon measure $\ti \mu$ via the relation $\mu = e^{\al x} \ti \mu.$ Since $\psi_s^*(e^{\al x} \ti \mu) = e^{\al x + s}  \psi_s^*(\ti \mu)$, we have that
$$ \lie_X \mu = \lie_X (e^{\al x} \ti \mu) = \frac{d}{ds} \psi_s^* (e^{\al x} \ti \mu)\vert_{s=0} =  \mu + e^{\al x}  (\lie_X \ti \mu).$$
In particular, $\mu$ is expanding (i.e, $\lie_X \mu = \mu$) if and only if $\ti \mu$ is invariant under $X$ (i.e, $\lie_X \ti \mu = 0$). Furthermore, since a measure $\ti \mu$ is invariant under horizontal translations if and only if it decomposes as a product measure
$$ \ti \mu = dx \otimes \nu,$$
where 
%\st{$dx$ denotes the Lebesgue measure on $\mathbb{R}$, and}
$\nu$ is the Borel measure on $N$ defined by
$$\nu(A) = \ti \mu(A \times [0,1)),$$
the result follows.
\end{proof}

\begin{lem}\label{lem sc meas}
Any simply connected measure expander is isomorphic to one of the measure expanders from cases (A), (Bi) or (C) in Theorem~\ref{meas_exp_class}.
\end{lem}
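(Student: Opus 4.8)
The plan is to combine the structural analysis already carried out in this subsection with Lemma~\ref{lem_split}. We have reduced to two underlying conformal surfaces, $\C$ and the disc $D$ (equivalently the upper half-plane $\Hbb$), and for each we have identified the possible complete conformal vector fields $X$ up to conformal automorphism: either $X$ has no zero, or $X$ is an infinitesimal dilation (possibly combined with a rotation on $\C$). So I would split into cases according to $(M,X)$ and in each case identify the region on which $X$ has no zero and flows "straight", put coordinates $(x,y)$ there so that $X=\frac1\al\pl{}{x}$ for some $\al>0$ (with $\al=1$ in the no-zero case, where we may rescale $x$), apply Lemma~\ref{lem_split} to conclude $\mu=e^{\al x}dx\otimes\nu$, and then match the resulting triple with the appropriate family in Theorem~\ref{meas_exp_class}.

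Concretely: (1) $M=\Hbb$, $X=\pl{}{x}$: writing $\Hbb=\R\times(0,\infty)$ with coordinates $(x,y)$, $y>0$, Lemma~\ref{lem_split} gives $\mu=e^xdx\otimes\nu$ with $\nu\in\calr((0,\infty))$, which is exactly family (Ai). (2) $M=\Hbb$, $X=\frac r\al\pl{}{r}$: passing to the conformal coordinate in which $\Hbb$ becomes the strip $\R\times(0,\pi)$ (via $z\mapsto\log z$, sending the dilation to a translation), $X$ becomes $\frac1\al\pl{}{x}$, and Lemma~\ref{lem_split} gives $\mu=e^{\al x}dx\otimes\nu$ with $\nu\in\calr((0,\pi))$, i.e. family (Aii). (3) $M=\C$, $X=\pl{}{x}$: Lemma~\ref{lem_split} with $N=\R$ gives family (Bi) directly. (4) $M=\C$, $X=\rho r\pl{}{r}+\si\pl{}{\th}$ with $\rho>0$ (established above to be forced), $\si\geq0$: here $X$ has its single zero at the origin, so I would delete the origin and pass to the cylinder $\R\times S^1$ via the biholomorphism $z=e^{x+i\th}$. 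If $\si=0$ the vector field becomes $\frac1\al\pl{}{x}$ and Lemma~\ref{lem_split} (with $N=S^1$) applies directly on the cylinder, giving $\mu=e^{\al x}dx\otimes\nu$, $\nu\in\calr(S^1)$; pushing forward to $\C$ (the origin carries no mass since $\mu$ is nonatomic, and the vector field extends smoothly across it by the Remark following Theorem~\ref{meas_exp_class}) lands in family (Ci). If $\si>0$ I would untwist by the diffeomorphism $F_\be$ with $\be=\si/\rho$, under which $X$ pulls back to $\frac1\al\pl{}{x}$ with respect to the twisted conformal structure; Lemma~\ref{lem_split}, which (by Remark~\ref{not_nec_conf_rmk}) is indifferent to the conformal structure, again produces a product measure, and pushing forward to $\C$ gives family (Cii). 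In every case one checks the origin, when present, receives zero mass because $\mu$ is nonatomic, so no atom is created or destroyed in the compactification step.

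The only genuinely delicate points are bookkeeping rather than substance. First, in case (4) one must be sure that the change of variables $z=e^{x+i\th}$ really does convert the infinitesimal dilation/rotation into a constant-coefficient vector field on the cylinder: since $z\pl{}{z}=\frac12(r\pl{}{r}-i\pl{}{\th})$ corresponds to $\pl{}{x}$ under $z=e^{x+i\th}$ up to constants, the real vector field $\rho r\pl{}{r}+\si\pl{}{\th}$ becomes a constant combination of $\pl{}{x}$ and $\pl{}{\th}$, which is $\frac1\al\pl{}{x}$ after the twist $F_\be$; this is a routine computation. Second, one must confirm that the pushed-forward vector field extends smoothly across the filled-in origin and is complete there — but this is asserted in the Remark after Theorem~\ref{meas_exp_class} and amounts to noting that $z\pl{}{z}$ and $iz\pl{}{z}$ are smooth on all of $\C$. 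Third, the reduction of $(M,X)$ to the listed normal forms was carried out earlier in the subsection, so here I would simply cite it; the residual normalisations ($\si\geq0$ on $\C$ via a reflection, $\al=1$ in the no-zero cases via rescaling $x$) are the ones recorded there. I expect the writing-up obstacle, such as it is, to be the verification in case (4) that the conformal coordinate change behaves as claimed and that filling in the puncture genuinely yields the families (Ci), (Cii) with the stated vector fields — everything else is a direct appeal to Lemma~\ref{lem_split}.

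\begin{proof}[Proof of Lemma~\ref{lem sc meas}]
By the discussion above, after an isomorphism we may assume $M$ is either $\C$ or the upper half-plane $\Hbb$, and $X$ is in one of the normal forms listed.

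\emph{Case $M=\Hbb$, $X=\pl{}{x}$.} Identifying $\Hbb=\R\times(0,\infty)$ with coordinates $(x,y)$, Lemma~\ref{lem_split} (with $\al=1$, $N=(0,\infty)$) gives that $\mu$ is expanding with respect to $X$ if and only if $\mu=e^xdx\otimes\nu$ for some $\nu\in\calr((0,\infty))$. This is precisely family (Ai).

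\emph{Case $M=\Hbb$, $X=\frac r\al\pl{}{r}$, $\al>0$.} The biholomorphism $z\mapsto\log z$ sends $\Hbb$ to the strip $\R\times(0,\pi)$ and the vector field $\frac r\al\pl{}{r}$ to $\frac1\al\pl{}{x}$. Lemma~\ref{lem_split} (with $N=(0,\pi)$) then gives $\mu=e^{\al x}dx\otimes\nu$ for some $\nu\in\calr((0,\pi))$, which is family (Aii).

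\emph{Case $M=\C$, $X=\pl{}{x}$.} Lemma~\ref{lem_split} (with $\al=1$, $N=\R$) gives $\mu=e^xdx\otimes\nu$ for some $\nu\in\calr(\R)$, which is family (Bi).

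\emph{Case $M=\C$, $X=\rho r\pl{}{r}+\si\pl{}{\th}$.} As established above, necessarily $\rho>0$; set $\al=\frac1\rho>0$ and, after a reflection, $\si\geq0$. The vector field $X$ vanishes only at the origin, so consider $\C\setminus\{0\}$, which the map $(x,\th)\mapsto e^{x+i\th}$ identifies conformally with the cylinder $\R\times S^1$. Under this identification $X$ becomes $\frac1\al\pl{}{x}+\frac\si\rho\pl{}{\th}$. If $\si=0$, then on the cylinder $X=\frac1\al\pl{}{x}$ and Lemma~\ref{lem_split} (with $N=S^1$) gives $\mu=e^{\al x}dx\otimes\nu$ for some $\nu\in\calr(S^1)$. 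Pushing $\mu$ and $X$ forward under $(x,\th)\mapsto e^{x+i\th}$ to $\C$, the vector field extends smoothly and completely across the origin, and since $\mu$ is nonatomic the origin receives no mass; this yields a member of family (Ci). If $\si>0$, put $\be=\frac\si\rho>0$ and pull back by $F_\be(x,\th)=(x,\th+\be x)$: with respect to the twisted conformal structure, $X$ becomes $\frac1\al\pl{}{x}$. By Remark~\ref{not_nec_conf_rmk}, Lemma~\ref{lem_split} still applies and gives that the pulled-back measure is $e^{\al x}dx\otimes\nu$ for some $\nu\in\calr(S^1)$; pushing back forward by $F_\be$ and then to $\C$ as before gives a member of family (Cii).

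In all cases $(M,\mu,X)$ is isomorphic to a member of (A), (Bi) or (C), as required.
\end{proof}
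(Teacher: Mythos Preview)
Your proof is correct and follows essentially the same approach as the paper's: reduce to the normal forms of $(M,X)$ already established, then in each case pass to product coordinates and invoke Lemma~\ref{lem_split}, using $F_\be$ to untwist in the rotating case. The only blemish is a small arithmetic slip in the final case: on the cylinder the vector field $\rho r\pl{}{r}+\si\pl{}{\th}$ becomes $\frac1\al\pl{}{x}+\si\pl{}{\th}$ (not $\frac1\al\pl{}{x}+\frac\si\rho\pl{}{\th}$), though your choice $\be=\si/\rho=\si\al$ is nonetheless the correct twisting parameter since one needs $\frac\be\al=\si$.
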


\begin{proof}[Proof of Lemma~\ref{lem sc meas}]
If $M = \mathbb{H}$, as discussed earlier we may assume that $X$ is $\pl{}{x}$ or $\frac{r}{\al} \pl{}{r}$. In the first case, applying Lemma~\ref{lem_split} with $N =(0,\infty)$, we must have a measure expander from (Ai). In the second case, we push forward by the conformal diffeomorphism $$\mathbb{H} \rightarrow \mathbb{R} \times (0,\pi), \quad z = r e^{i\theta} \mapsto (\log r , \theta),$$
to give a measure expander $(\R \times (0,\pi),\mu, \frac{1}{\al} \pl{}{x})$. Applying Lemma~\ref{lem_split} with $N = (0,\pi)$, we must have a measure expander from (Aii). These are all possible measure expanders on $\mathbb{H}$ up to isomorphism.

If $M = \C$ and $X = \pl{}{x}$, applying Lemma~\ref{lem_split} with $N = \R$, we have a measure expander from (Bi).

Finally, we need to consider the case $M = \C$ and $X = \frac{r}{\al} \pl{}{r} + \frac{\be}{\al} \pl{}{\th}$, for some $\al > 0$ and $\be \geq 0$. Note that if we remove the origin, then we still have a measure expander, and this new measure expander uniquely determines the original. Pulling back under the conformal transformation 
$$ \R \times S^1 \to \C \setminus \{0\}, \quad (x,\th) \mapsto e^{x+i\th},$$
we have a measure expander $(\R \times S^1 , \mu , \frac{1}{\al} \pl{}{x} + \frac{\be}{\al} \pl{}{\th})$. If $\be = 0$, we can immediately apply Lemma~\ref{lem_split} with $N=S^1$ to conclude that our punctured measure expander is in case (Bii), and hence our original measure expander is in case (Ci). Otherwise $\be > 0$, so that when we pull back by the twisting diffeomorphism 
$F_{\be}(x,\th)=(x,\th+\be x)$ from
Theorem~\ref{meas_exp_class}, we have a measure expander $$(F_{\be}^*(\R \times S^1) , F_{\be}^*\mu, \frac{1}{\al} \pl{}{x}),$$
to which we can apply Lemma~\ref{lem_split}, giving $F_{\be}^*\mu = e^{\al x} dx \otimes \nu$, for some $\nu \in \mathcal{R}(S^1)$. Here we use the notation $F_{\be}^*(\R \times S^1)$ to indicate that we are pulling back the conformal structure of $\R \times S^1$.

Pushing forward by $F_{\be}$, our punctured measure expander 
$$\left(\R \times S^1 , (F_{\be})_*(e^{\al x} dx \otimes \nu), \frac{1}{\al} \pl{}{x} + \frac{\be}{\al} \pl{}{\th}\right)$$ 
is in (Biii), and so our original measure expander is in (Cii). 
\end{proof}

At this point we have found all possible simply connected measure expanders.

\subsection{Quotients of measure expanders}

For any measure expander $(M,\mu,X)$, its universal cover $(\ti M, \ti \mu, \ti X)$ is also a measure expander. 
Moreover, the deck transformations of the covering $G$ form a discrete subgroup of the group of conformal automorphisms of $
\ti M$, such that all nontrivial elements of $G$ are fixed point free, and each element of $G$ preserves both $\ti \mu$ and $\ti X$:
$$ g_* \ti \mu = \ti \mu, \quad g_* \ti X =\ti X, \quad \forall g \in G.$$

In this section we identify all possible ways in which we can go the other way and take a quotient of one of the simply connected measure expanders found in Section \ref{simp_conn_sect} to give a new measure expander.

The only nontrivial conformal automorphisms of $\mathbb{H}$ preserving $\pl{}{x}$ are horizontal translations, but these would scale any measure from (Ai). Moreover, the only nontrivial conformal automorphisms of $\mathbb{H}$ preserving $\frac{r}{\al} \pl{}{r}$ are dilations, which after pushing forward
to $\R \times (0,\pi)$ via the biholomorphism 
$z=re^{i\th}\mapsto (\log r, \th)$, correspond to horizontal translations preserving the vector field $\frac{1}{\al} \pl{}{x}$. But any horizontal translation would scale a measure from (Aii). We also note that the only fixed point free automorphisms of $\C$ are translations. Therefore, up to isomorphism, we may assume that any measure expander that is not simply connected has universal cover $\ti M =\C$, with $\ti X = \pl{}{x}$, putting the universal cover in case (Bi). 

In order for a translation of $\C$ to preserve a measure $\ti \mu$ from case (Bi), it must have a nontrivial vertical component. We first consider the situation that such a translation is purely vertical, so that after dilating, we may assume our measure expander is of the form 
$$(\C , \ti \mu, \frac{1}{\al} \pl{}{x}),$$
for some $\al > 0$, and is invariant under the normalised translations
$$(x,y) \mapsto (x, y + 2\pi n), \quad n \in \Z.$$
Passing to the quotient we have a measure expander 
$$(\R \times S^1, \mu,  \frac{1}{\al} \pl{}{x}).$$
By either looking at the structure of $\ti \mu$ or applying Lemma~\ref{lem_split} directly, we see that $\mu$ is of the form
\begin{equation*}
\mu = e^{\al x} dx \otimes \nu,
\end{equation*}
for some $\nu \in \mathcal{R}(S^1)$, and therefore the quotiented measure expander is in case (Bii).

In the more general case that our translations are not orthogonal to our vector field, we first dilate and rotate to put our measure expander in the form 
$$(\C , \ti \mu, \frac{1}{\al} \pl{}{x} + \frac{\be}{\al} \pl{}{y}),$$
for some $\al > 0$ and $\be \in \R \setminus \{0\}$, and so it is invariant under the normalised translations 
$$(x,y) \mapsto (x , y + 2\pi n), \quad n \in \Z.$$
After possibly reflecting, we may also assume that $\be > 0$.

The quotient is then a measure expander $(\R \times S^1 , \mu, \frac{1}{\al} \pl{}{x} + \frac{\be}{\al} \pl{}{\th})$. Pulling back by the twisting diffeomorphism $F_{\be}$ as defined in Theorem~\ref{meas_exp_class} gives a measure expander $$((F_{\be})^*(\R \times S^1) , F_{\be}^*\mu, \frac{1}{\al} \pl{}{x}).$$
Applying Lemma~\ref{lem_split}, we find that 
\begin{equation*}
F_{\be}^*\mu = e^{\al x} dx \otimes \nu,
\end{equation*}
for some $\nu \in \mathcal{R}(S^1)$, and therefore the quotiented measure expander is in (Biii). This completes the proof of the first part of Theorem~\ref{meas_exp_class}.

\subsection{Isomorphic measure expanders}
We now know that every measure expander is isomorphic to a measure expander appearing in the list from Theorem~\ref{meas_exp_class}. It remains to establish when  two  measure expanders $(M_1,\mu_1,X_1)$, $(M_2,\mu_2,X_2)$ from the list are isomorphic.

We would like to show first that they belong to the same family (in particular we have $M_1=M_2$, not just modulo conformal automorphism) and the vector fields are identical.

It is clear that $M_1$ and $M_2$ have to have the same underlying conformal type, which immediately separates families (Ai) and (Aii) from the others.

To distinguish between (Ai) and (Aii), which are both conformal to the disc, observe that in case (Ai), the automorphisms generated by the vector field are parabolic,
i.e., they extend to bijections of $\overline{D}$ that fix
precisely one point on the boundary $\partial D$,
whereas in (Aii) they are hyperbolic, i.e., they extend to bijections of $\overline{D}$ that fix
precisely two points on the boundary $\partial D$.
If the two measure expanders are in case (Ai) then they automatically have the same vector field 
$X=\pl{}{x}$ by definition of that family.
If the two measure expanders are in case (Aii), with 
$X_1=\frac{1}{\al_1}\pl{}{x}$ and
$X_2=\frac{1}{\al_2}\pl{}{x}$, for $\al_1,\al_2>0$, then 
the only conformal automorphisms of the strip $\R \times (0,\pi)$ that could push forward $X_1$ to $X_2$ are 
a combination of a horizontal translation and possibly the reflection about the line $\R \times \{\frac{\pi}{2}\}$,
in which case we are forced to have $X_1=X_2$ as desired.

Even more basic is that $M_1$ and $M_2$ have to have the same topology. This further separates (Bii) and (Biii) from the others.

To distinguish (Bi) from (Ci) and (Cii), observe that in the latter cases $X$ has a fixed point, but in case (Bi) it does not. 
To distinguish between (Ci) and (Cii), we note that a conformal isomorphism must fix the origin, and so is a combination of a rotation and homothety, but any of the vector fields in each of these cases is invariant under the push-forward by such a transformation. As a by-product, we find also that $X_1=X_2$.

Finally, in cases (Bii) and (Biii), automorphisms of $\R \times S^1$ are generated by
translations and reflections in both $x$ and $\th$. 
Since we always have $\al > 0$, reflections reversing the orientation of the line are not permitted, and any automorphism must be a combination of a translation (in $x$ and $\th$) and possibly a reflection of the form 
$(x,\th) \mapsto (x,-\th)$.

In case (Bii), any vector field $\frac{1}{\al} \pl{}{x}$ is invariant under the push forward by such an automorphism. Finally, since $\be > 0$ in case (Biii), the only permitted automorphisms are translations. But any vector field $\frac{1}{\al} \pl{}{x} + \frac{\be}{\al} \pl{}{\th}$ is invariant under the push-forward by a translation.

We have thus shown that the two isomorphic measure expanders are in the same family, and have the same vector field $X$. It remains to show that the measures of the measure expanders are related in the way claimed in the theorem.

\begin{lem}\label{lem_split_isom}
Let $N$ be either $S^1$ or a connected open subset of $\R$,
with their standard metrics.
Suppose that $\nu_1,\nu_2\in \mathcal{R}(N)$, $\al > 0$, and
that the corresponding nontrivial Radon measures
$$\mu_i = e^{\al x} dx \otimes \nu_i, \quad i  \in \{1,2\},$$
on $\R \times N$ are expanding under the action of the translating vector field $\frac{1}{\al} \pl{}{x}$. If $\R \times N$ has the conformal structure of the Cartesian product, then the measure expanders $(\R \times N,\mu_i, \frac{1}{\al} \pl{}{x})$ for $i=1,2$ are isomorphic if and only if there exists an isometry $\varphi:N \to N$, and some constant $\lambda > 0$, such that \begin{equation}\label{eqn scaled isom}
    \varphi_*(\nu_1) = \lambda \cdot \nu_2.
\end{equation}
%
%Furthermore, in the special case $N=S^1$ and $\be > 0$, letting $F_{\be}$ denote the twisting diffeomorphism as defined in Theorem~\ref{meas_exp_class}, the measure expanders $(F_{\be}^* \left( \R \times S^1 \right),\mu_i, \frac{1}{\al} \pl{}{x})$ for $i=1,2$ are isomorphic via a translation if and only if there exists an orientation preserving isometry $\varphi:S^1 \to S^1$, and some constant $\lambda > 0$, such that \eqref{eqn scaled isom} holds.
\end{lem}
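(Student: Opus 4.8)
The plan is to reduce the statement to a structural description of the conformal diffeomorphisms of $\R\times N$ relevant here. Concretely, I would first establish the claim that a map $\Phi$ is a conformal diffeomorphism of $\R\times N$ (with its product conformal structure) satisfying $\Phi_*\big(\tfrac{1}{\al}\pl{}{x}\big)=\tfrac{1}{\al}\pl{}{x}$ if and only if
\[
\Phi(x,y)=(x+c,\varphi(y))\qquad\text{for some }c\in\R\text{ and some }\varphi\in\mathrm{Isom}(N).
\]
Granting this, the lemma follows from a short computation outlined at the end.

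To prove the claim, I would use that preserving the vector field $\tfrac{1}{\al}\pl{}{x}$ (equivalently $\pl{}{x}$) is the same as commuting with its complete flow $\psi_t(x,y)=(x+t,y)$. Writing $\Phi=(A,B)$, the relation $\Phi\circ\psi_t=\psi_t\circ\Phi$ forces $A(x,y)=x+a(y)$ and $B(x,y)=b(y)$ for smooth functions $a,b$ on $N$. Next I would impose conformality: in the complex coordinate $z=x+iy$, a conformal diffeomorphism of the connected surface $\R\times N$ is globally holomorphic or globally anti-holomorphic, and applying the (anti-)Cauchy--Riemann equations to $w=A+iB=x+a(y)+ib(y)$ forces $a\equiv c$ constant and $b(y)=\pm y+d$. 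Then $\varphi:=b$ is an affine map $y\mapsto\pm y+d$ of $\R$ restricting to a bijection of $N$ (for $N=S^1$ one lifts $b$ to $\R$ to see this), and such maps are exactly the isometries of $N$ with its standard metric; this accounts for every possible isometry group of $N$. The converse direction of the claim is immediate, since such a $\Phi$ is a product of a translation of $\R$ and an isometry of $N$, hence conformal and $\pl{}{x}$-preserving.

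Assuming the claim, for $\Phi(x,y)=(x+c,\varphi(y))$ a change of variables gives
\[
\Phi_*\mu_1=\Phi_*\big(e^{\al x}dx\otimes\nu_1\big)=e^{-\al c}\,e^{\al x}dx\otimes\varphi_*\nu_1,
\]
so, by uniqueness of the fibre measure (as in the proof of Lemma~\ref{lem_split}, after dividing out $e^{\al x}$), $\Phi_*\mu_1=\mu_2$ holds exactly when $\varphi_*\nu_1=e^{\al c}\nu_2$. Hence an isomorphism yields \eqref{eqn scaled isom} with $\lambda:=e^{\al c}>0$, and conversely, given $\varphi\in\mathrm{Isom}(N)$ and $\lambda>0$ with $\varphi_*\nu_1=\lambda\nu_2$, the map $\Phi(x,y)=\big(x+\tfrac{1}{\al}\log\lambda,\varphi(y)\big)$ realises the required isomorphism. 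The main obstacle is the structural claim; once the flow-commutation observation is in place, the (anti-)holomorphic case analysis is routine, the one subtlety being to confirm the global holomorphic/anti-holomorphic dichotomy on the connected surface $\R\times N$ and, when $N=S^1$, to descend the argument from the universal cover.
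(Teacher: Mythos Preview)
Your proposal is correct and follows essentially the same route as the paper. The paper's proof is terser: it observes directly that a conformal diffeomorphism $\Phi$ with $\Phi_*(\pl{}{x})=\pl{}{x}$ must satisfy $\Phi_*(\pl{}{y})=\pm\pl{}{y}$ (angle preservation plus unit scaling in the $x$-direction forces this), and from there writes $\Phi(x,y)=(x+\al^{-1}\log\lambda,\vph(y))$ with $\vph$ an isometry of $N$; you reach the same structural form via flow commutation and the (anti-)Cauchy--Riemann equations, which is just a more explicit unpacking of the same idea. The pushforward computation and the identification $\lambda=e^{\al c}$ are identical in both arguments.
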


\begin{proof}
If $\Phi: \R \times N \to \R \times N$ is a conformal diffeomorphism such that $\Phi_*(\frac{1}{\al} \pl{}{x})=\frac{1}{\al}\pl{}{x}$,
then $\Phi_*(\pl{}{y})=\pm \pl{}{y}$. Therefore, 
$$ \Phi(x,y) := (x + {\al}^{-1} \log(\lambda) , \varphi(y)), \quad \forall (x,y) \in \R \times N,$$
for some $\lambda>0$, and $\varphi: N \to N$ an isometry. If $\Phi$ is additionally an isomorphism between the measure expanders
$(\R\times N, \mu_i,\frac{1}{\al}\pl{}{x})$, then
\begin{multline*}
e^{\al x} dx\tensor \nu_2=\mu_2=\Phi_*(\mu_1)
= \Phi_*( e^{\al x} dx \otimes \nu_1)\\
=e^{\al x - \log \lambda} dx \otimes \varphi_*(\nu_1) = e^{\al x} dx \otimes \lambda^{-1} \varphi_*(\nu_1),
\end{multline*}
and so $\nu_1$ and $\nu_2$ must satisfy $\eqref{eqn scaled isom}$.
\end{proof}

The last step in the proof of Theorem~\ref{meas_exp_class} is to show that the measures are related in the way described in the theorem. For cases (A), (Bi), and (Bii), it follows immediately from Lemma~\ref{lem_split_isom}.

In case (Biii), recall that any conformal automorphism of $\R \times S^1$ fixing a vector field of the form $$\frac{1}{\al}\pl{}{x} + \frac{\be}{\al}\pl{}{\th}, \quad \al, \be > 0,$$
must be a translation. Since the conjugation of a translation by a twisting diffeomorphism $F_{\be}$
%$$ F_{\be} \circ T_{a,b} \circ F_{\be}^{-1} = T_{a,b+\be\al}, $$
is still a translation, any two isomorphic measure expanders $(F_{\be}^*(\R \times S^1) , e^{\al x} dx \otimes \nu_i , \frac{1}{\al} \pl{}{x})$, must be isomorphic via a translation. Repeating the calculation in the proof of Lemma~\ref{lem_split_isom}, we deduce that there is some $\la > 0$ and an \emph{orientation preserving} isometry $\vph : S^1 \to S^1$ such that $\nu_1$ and $\nu_2$ satisfy \eqref{eqn scaled isom}.

For cases (Ci) and (Cii), any conformal automorphism fixing the vector field must also fix the origin, and hence will restrict to a conformal automorphism on the punctured plane. These cases therefore follow from (Bii) and (Biii).

Combining the entirety of Section \ref{class_meas_exp_sect}, this completes the proof of Theorem \ref{meas_exp_class}.

\section{Gradient expanding solitons}
\label{grad_sect}

Suppose $(M,g,\nabla f)$ is an expanding gradient Ricci soliton. If we rotate the vector field by 90 degrees then we get a 
Killing field; see e.g. \cite[Lemma 3.1]{chow_soliton_book}. Therefore, any measure expander that corresponds to a gradient soliton must have the property that this rotated vector field leaves the measure invariant. Since Killing fields on complete manifolds are necessarily complete themselves, this immediately rules out any of the measure expanders from (A) of Theorem \ref{meas_exp_class}.

In cases (Biii) and (Cii), for the corresponding soliton to be gradient, the rotated vector field $-\frac{\beta}{\al} \pl{}{x} + \frac{1}{\al} \pl{}{\th}$ must be a Killing field, which has flow $f_s(x,\th) := (x - \frac{\be s}{\al} , \th + \frac{s}{\al})$ for $s \in \R$. Since the measure $\mu$ must be invariant under this flow, we can deduce that
%\cmt{Maybe this should be a push forward, or otherwise we might need to adjust the signs of the betas below?}
\begin{equation}\label{eqn twist killing}
\mu \left( (0,1) \times S^1\right) = (f_\al)_* \mu \left( (0,1) \times S^1\right) = \mu \left( (\be , 1 + \be) \times S^1 \right).
\end{equation}
Furthermore, using that $\mu$ has the form
$$ \mu  = (F_{\be})_* \left( e^{\al x} dx \otimes \nu \right),$$
for some $\nu \in \mathcal{R}(S^1)$, and that $F_{\beta}^{-1}\left( I \times S^1 \right) = I \times S^1$ for any interval $I$, when substituted into \eqref{eqn twist killing} we deduce that
$$ \frac{1}{\al} (e^{\al}-1) \nu(S^1) = \mu \left( (0,1) \times S^1\right) = \mu \left( (\be,1+\be) \times S^1\right) = \frac{1}{\al}  e^{\al \be}(e^{\al}-1) \nu(S^1), $$
and hence $\nu = 0$, which is a contradiction. Therefore, the only cases in Theorem~\ref{meas_exp_class} that could correspond to gradient solitons are (Bi), (Bii) and (Ci).

In case (Bi), the rotated vector field $\pl{}{y}$ must be a Killing field, and so the fibre measure $\nu$ must be invariant under translation. Therefore, up to isomorphism of the measure expander, $\nu$ is just the Lebesgue measure $dy$ on $\R$.

Similarly, in cases (Bii) and (Ci) the fibre measure $\nu$ must be invariant under rotation, and so up to isomorphism of the measure expander is just the quotient of the Lebesgue measure $d\theta$ on $S^1$.

We have proved the following.

\begin{thm}
\label{thm grad solitons}
Suppose that $(M,g,\nabla f)$ is a nontrivial expanding gradient Ricci soliton, and $(M,\mu,\nabla f)$ the corresponding measure expander as in Theorem~\ref{exp_meas_corresp}. Then $(M,\mu,\nabla f)$ is isomorphic to one of the following distinct measure expanders:

\begin{enumerate}
\item[(1)] $\left(\C, e^x dx \otimes dy, \frac{\partial}{\partial x}\right)$, inducing the universal cover of the soliton emanating from the punctured plane.

\item[(2)] $\left(\R \times S^1, e^{\al x} dx \otimes d \th, \frac1{\al}\pl{}{x}
\right)$, for each $\al > 0$, inducing solitons with one cusp end and one conical end.

\item[(3)] $\left(\C,e^{\al x} dx \otimes d\th, \frac1{\al}\pl{}{x}
\right)$, for each $\al > 0$,
inducing  solitons having one conical end, with $\al = 2$ giving the Gaussian soliton.
\end{enumerate}
\end{thm}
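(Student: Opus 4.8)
The plan is to combine the classification of measure expanders in Theorem~\ref{meas_exp_class}, the correspondence in Theorem~\ref{exp_meas_corresp}, and the standard fact that a two-dimensional expanding gradient Ricci soliton $(M,g,\nabla f)$ has the property that the $90^\circ$-rotation $J\nabla f$ of $\nabla f$ is a Killing field (see e.g.\ \cite[Lemma 3.1]{chow_soliton_book}). Since Killing fields on complete manifolds are automatically complete, and since the corresponding measure $\mu$ is preserved by the flow of $J\nabla f$ (the flow of a Killing field is by isometries, hence preserves the canonically associated measure, and $\mu$ is built from the soliton metric in a geometrically natural way), the strategy is: go through each of the seven families in Theorem~\ref{meas_exp_class}, compute the rotated vector field explicitly, and test whether $\mu$ can be invariant under it.

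First I would dispose of family (A). There the underlying surface is conformally the disc; the rotated vector field $J X$ is, in the two subcases, the infinitesimal generator of a non-complete flow on $\mathbb{H}$ (it points transversally out of the half-plane), so it cannot be a Killing field of any complete metric, ruling out (A) entirely. Next I would handle the twisted families (Biii) and (Cii): here $X=\frac1\al\pl{}{x}+\frac\be\al\pl{}{\th}$ (in the $F_\be$-pulled-back picture) so $JX=-\frac\be\al\pl{}{x}+\frac1\al\pl{}{\th}$, whose time-$\al$ flow is $f_\al(x,\th)=(x-\be,\th+1)$. Invariance of $\mu=(F_\be)_*(e^{\al x}dx\otimes\nu)$ under $f_\al$, using that $F_\be^{-1}(I\times S^1)=I\times S^1$ for an interval $I$, forces
\begin{equation*}
\tfrac1\al(e^\al-1)\nu(S^1)=\mu\big((0,1)\times S^1\big)=\mu\big((\be,1+\be)\times S^1\big)=\tfrac1\al e^{\al\be}(e^\al-1)\nu(S^1),
\end{equation*}
and since $\be>0$ this gives $\nu(S^1)=0$, a contradiction. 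So (Biii) and (Cii) are excluded. This leaves exactly (Bi), (Bii), (Ci).

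For the surviving families I would determine the fibre measure. In (Bi) the rotated field is $\pl{}{y}$, which must be Killing, so $\nu\in\mathcal{R}(\R)$ must be translation-invariant, hence (up to the scaling allowed in the isomorphism relation) $\nu=dy$. In (Bii) and (Ci) the rotated field is $\pl{}{\th}$, which must be Killing, so $\nu\in\mathcal{R}(S^1)$ must be rotation-invariant, hence (up to scaling) $\nu=d\th$. Feeding these back through the isomorphism statement of Theorem~\ref{meas_exp_class} shows the three listed measure expanders are pairwise non-isomorphic (different underlying spaces in (Bi) versus the others; in (Bii) versus (Ci), different topology; within (Bii) different $\al$ give non-isometric fibre-scaled data since the fibre is fixed as $d\th$). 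Finally I would add the geometric identifications in parentheses, reading them off from the known symmetric expanding solitons in the literature (\cite{Bernstein_Mettler}, \cite{GHMS}, \cite{RFnotes}): (1) is the universal cover of the soliton smoothing the vertex of a cylinder; (2) are the cusp-to-cone solitons; (3) are the $O(2)$-symmetric cone-smoothing solitons, with $\al=2$ the Gaussian soliton.

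The main obstacle I anticipate is not the family-by-family casework, which is routine once the rotation trick is invoked, but rather justifying cleanly that the measure $\mu$ attached to a gradient soliton really is invariant under the flow of the Killing field $J\nabla f$. One must argue that the isometry group of $(M,g)$ acts on the time-zero limit measure $\mu$ from Theorem~\ref{time_zero_limit_thm} compatibly with its action on the volume measures $\mu_{g(t)}$ --- which it does, because any isometry $\vph$ of $(M,g)$ extends to a symmetry of the whole soliton flow $g(t)=t\phi_t^*(g)$ provided $\vph$ commutes with $\phi_t$ (equivalently $\vph_*X=X$), and $J\nabla f$ has exactly this property since its flow consists of isometries fixing $f$ and hence commuting with the gradient flow of $f$. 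Passing to the weak limit as $t\downto 0$ then transfers the invariance to $\mu$. Once that point is secured the rest is bookkeeping.
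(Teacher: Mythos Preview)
Your proposal is correct and follows essentially the same route as the paper: invoke the rotated Killing field $J\nabla f$, rule out family (A) by incompleteness of the rotated field, eliminate (Biii) and (Cii) by the same volume-comparison contradiction on $(0,1)\times S^1$ versus $(\be,1+\be)\times S^1$, and then pin down $\nu$ as Lebesgue on the fibre in the remaining three families. The paper is in fact terser than you on the point you flag as the main obstacle --- the invariance of $\mu$ under the flow of $J\nabla f$ --- so your added justification (that isometries commuting with $\phi_t$ pull back $g(t)$ to itself and hence preserve the weak limit $\mu$) is a welcome elaboration rather than a deviation.
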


The argument above shows that any nontrivial expanding gradient soliton is induced by one of the examples (1), (2) or (3), but it is worth verifying that every measure expander in this list really does induce a \emph{gradient} soliton. This follows because the solitons will inherit the invariance under vertical translations from the measures $\mu$, and this allows us to define a soliton potential function $f$, depending on $x$ but not on $y$ (or $\th$), 
simply by integrating the ODE $\grad f=X$ along a horizontal line.

This recovers the known classification of expanding 
\emph{gradient} solitons in two dimensions (e.g. \cite[Chapter 3]{chow_soliton_book}).

\vskip 0.2cm

\noindent
%LP: \url{https://sites.google.com/view/luke-peachey}

%\vskip 0.2cm

LP: \href{mailto:lukepeachey@cuhk.edu.hk}{lukepeachey@cuhk.edu.hk}

%\vskip 0.2cm

\noindent
{\sc Department of Mathematics, Chinese University of Hong Kong,  Shatin, New Territories, HK.}

\noindent
%PT: \url{https://homepages.warwick.ac.uk/~maseq/}
%\url{http://warwick.ac.uk/fac/sci/maths/people/staff/peter\_topping/} 

%\vskip 0.2cm

PT: \href{mailto:p.m.topping@warwick.ac.uk}{p.m.topping@warwick.ac.uk}

\noindent
{\sc Mathematics Institute, University of Warwick, Coventry,
CV4 7AL, UK.}

\end{document}